\documentclass[11pt]{article}

\usepackage{tikz}
\usepackage{times,amsfonts,amsmath,amstext,amsbsy,amssymb,
  amsopn,amsthm,upref,eucal,amscd}
\usepackage[colorlinks=true]{hyperref}
\usepackage[T1]{fontenc}
\usepackage[backgroundcolor=white]{todonotes}
\usepackage{multirow}
\usepackage{algorithm}
\usepackage{enumerate}
\usepackage[noend]{algpseudocode}
\bibliographystyle{plain}
\usepackage{color}
\setlength {\marginparwidth }{2cm}

\newtheorem{theorem}{Theorem}

\newtheorem{proposition}{Proposition}
\newtheorem{corollary}[proposition]{Corollary}
\newtheorem{lemma}[proposition]{Lemma}
\newtheorem{remark}[proposition]{Remark}
\newtheorem{definition}[proposition]{Definition}
\newtheorem{example}[proposition]{Example}

\numberwithin{proposition}{section}
\numberwithin{equation}{section}


\makeatletter
\newcommand{\subjclass}[2][1991]{%
  \let\@oldtitle\@title%
  \gdef\@title{\@oldtitle\footnotetext{#1 \emph{Mathematics subject classification.} #2}}%
}

\begin{document}

\title{Effective estimates of ergodic quantities illustrated on the Bolyai-R\'enyi map}

\author{Mark Pollicott and Julia Slipantschuk\footnote{The authors were partly supported by ERC-Advanced Grant 833802-Resonances.}}

\subjclass[2020]{37M25, 28D20, 11K16}



\newcommand{\Addresses}{{
  \bigskip
  \footnotesize

  M.~Pollicott, \text{Department of Mathematics, University of Warwick, Coventry, CV4 7AL, UK.}\par\nopagebreak
  \textit{E-mail}: \texttt{masdbl@warwick.ac.uk}

  \medskip

  J.~Slipantschuk, \text{Department of Mathematics, University of Warwick, Coventry, CV4 7AL, UK.}\par\nopagebreak
  \textit{E-mail}: \texttt{julia.slipantschuk@warwick.ac.uk}
}}

\maketitle

\abstract{We present a practical and effective method
for rigorously estimating quantities associated to top
eigenvalues of transfer operators to very high precision. The method combines
explicit error bounds of the Lagrange-Chebyshev approximation with
an established min-max
method.
We illustrate its applicability by significantly improving rigorous estimates on
various ergodic quantities associated to the Bolyai-R\'enyi map.}

\section{Introduction}

In a classic 1957 paper, A. R\'enyi described a general class of $f$-expansions.    This included, as special cases, base $d$-expansions and $\beta$-expansions; continued fraction expansions; and (as Example 3 in \cite{renyi}) the iterated radical expansions, which first appeared in Bolyai's 1832 book \cite{bolya}.   This last example
gives an expansion for almost every  $0 < x < 1$ in the form of an iterated square root:
\begin{equation}\label{eq:renyi_exp}
x = -1 
+ \sqrt{a_1 + \sqrt{a_2 + \sqrt{a_3 + \cdots}}},
\end{equation}
where $a_1, a_2, a_3, \ldots  \in \{1,2,3\}$.
Related infinite radical expansions appear in the notebooks of Ramanujan \cite{ramanujan}. 

Statistical properties of the expansions in \eqref{eq:renyi_exp} are given
by ergodic properties of the associated Bolyai-R\'enyi transformation $T\colon[0,1) \to [0,1)$ - a piecewise analytic expanding Markov map (see Section \ref{sec:BR}) with a unique $T$-invariant probablity
measure $\mu$. Bosma, Dajani and Kraaikamp \cite{bdk} conjectured
that the metric entropy is $h(T,\mu)\approx 1.0545$, accurate to two decimal places.
In \cite{jp} a rigorous estimate of $1.056313074$ was given using a periodic point method.
One of the aims of this note is to use estimation of this and other quantities
to illustrate an alternative approach leading to significantly improved estimates.

In this and similar systems, estimating quantities of interest such as
entropy, or digit frequencies, requires approximation
of the pressure function arising from the maximal positive
eigenvalue of an appropriately chosen (positive) transfer operator $\mathcal{L}$.
This eigenvalue satisfies the so-called min-max principle,
allowing to bound the pressure function in terms of bounds on
$(\mathcal{L}f) / f$ for suitable positive functions $f$, see \cite[Theorems 7.25-7.26]{mayer}
and \cite{PV1}.

Two crucial ingredients are needed to make these bounds rigorous and sufficiently tight:
the first consists in choosing a suitable $f$, the
second in obtaining rigorous and tight lower and upper bounds on $(\mathcal{L}f) / f$.
For the first step, in order to obtain effective bounds on the pressure function,
the function $f$ should be chosen close to the leading eigenfunction of $\mathcal{L}$.
This can be achieved by approximating
$\mathcal L$ by a suitable finite-rank operator $\mathcal L_n$ and using its leading
eigenfunction as a candidate for $f$. In \cite{PV1, PS} this was accomplished using the classical
Lagrange-Chebyshev interpolation scheme, for which results in \cite{BS} guarantee exponential
convergence (in rank $n$) of eigenfunctions of $\mathcal L_n$ to those of $\mathcal{L}$ for
analytic expanding maps $T$.
For the second step, rigorous lower and upper bounds on $(\mathcal Lf) / f$ were obtained
using ball arithmetic, involving the use of higher-order derivatives
for increased precision.

In this note we provide a refinement of this method, critically also using the
Lagrange-Chebyshev interpolation method for enhancing the second step.
In particular, the exponential convergence properties of this scheme allow
us to rigorously bound $(\mathcal{L} f) / f$ to much greater precision,
while being more efficient and using a single derivative of the quotient function.
Overall, leveraging the strong convergence properties of Lagrange-Chebyshev interpolation
for efficiently achieving accurate approximation, and ball arithmetic for rigorous bounding, this
``Lagrange-Chebyshev min-max algorithm''
provides a method for rigorous high-accuracy estimation of ergodic quantities for analytic expanding maps,
at comparatively small computational cost.

This note is organized as follows. We introduce the Bolyai-R\'enyi map
and state our approximation results for several of its ergodic quantities in \S\ref{sec:BR},
and summarize the well-known properties of the associated
transfer operator and its leading eigenvalue in \S\ref{sec:transfer_op}.
In \S\ref{sec:algo_section} we describe our min-max method:
we derive the key estimate Theorem \ref{thm:bounds_h} in \S\ref{sec:minmax},
describe the choice of approximate eigenfunctions $f$ in \S\ref{sec:choosing_fg}
and the enhanced method for bounding $(\mathcal{L}f) / f$ in \S\ref{sec:rigorous_ineq}, and
provide pseudo-code for the method's implementation in \S\ref{sec:algorithm_impl}.
Proofs of the results from \S\ref{sec:BR} based on this method are given in \S\ref{sec:proofs}.
Finally, in \S\ref{sec:comparison} we compare our algorithm with other available methods,
and finish off with some generalizing remarks in \S\ref{sec:remarks}.

\section{Bolyai-R\'enyi transformation and results}\label{sec:BR}

Associated to \eqref{eq:renyi_exp} is the Bolyai-R\'enyi transformation $T\colon[0,1) \to [0,1)$ of the form
$$
T(x)
=
\begin{cases}
x^2 + 2x &\hbox{ if } 0 \leq x < \sqrt{2} -1 \cr
x^2 + 2x -1&\hbox{ if } \sqrt{2} -1 \leq x < \sqrt{3} -1 \cr
x^2 + 2x -2&\hbox{ if } \sqrt{3} -1 \leq x \leq 1. \cr
\end{cases}
$$
We can then write digits in the radical expansion \eqref{eq:renyi_exp} as
$$
a_n
=
\begin{cases}
1 &\hbox{ if } 0 \leq T^{n-1}(x) < \sqrt{2} -1 \cr
2 &\hbox{ if } \sqrt{2} -1 \leq T^{n-1}(x) < \sqrt{3} -1 \cr
3 &\hbox{ if } \sqrt{3} -1 \leq T^{n-1}(x) \leq 1. \cr
\end{cases}
$$

Formally, it is often convenient to consider $T$ as a piecewise expanding Markov map acting on three closed intervals.
 Since $T$ is expanding (with $\inf_x T'(x) \geq 2$) there is a (unique) $T$-invariant probability measure $\mu$
 which is absolutely continuous (even equivalent) to Lebesgue measure by, for example, the classical Lasota-Yorke theorem \cite{ly}.
We denote the metric entropy of $\mu$ by $h(\mu) = h(T, \mu)$, and by the classical Rokhlin formula
it is also conveniently  given as the Lyapunov exponent, i.e.,
 \begin{equation}\label{eq:entropy}
 h(\mu) = \int \log T'(x) d\mu(x),
\end{equation}
 which, as usual, quantifies the sensitive dependence on initial conditions for typical orbits.

\begin{theorem}\label{thm:entropy}
The metric entropy of $T$ can be rigorously estimated to be
\begin{equation*}
h(\mu) = 1.0563130740\,7297055209\,9568877064\,0651679335\,4262184005 \ldots,
\end{equation*}
accurate to the number of decimal places presented.
\end{theorem}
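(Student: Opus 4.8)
The plan is to realise the entropy as (minus) the derivative at $s=1$ of a pressure function and then to pin that derivative down using the min-max machinery of Section~\ref{sec:minmax}. For real $s$ near $1$ consider the weighted transfer operator $(\mathcal{L}_s f)(x) = \sum_{Ty=x}|T'(y)|^{-s}f(y)$, acting on functions analytic on a fixed complex neighbourhood of the three Markov intervals on which $T$ is defined; let $\lambda(s)$ be its simple leading positive eigenvalue and set $P(s)=\log\lambda(s)$. Since $T$ is a piecewise-analytic expanding Markov map with absolutely continuous invariant measure $\mu$, the operator $\mathcal{L}_1$ is the transfer operator with respect to Lebesgue measure, so $\lambda(1)=1$ and $P(1)=0$; and combining the Rokhlin formula~\eqref{eq:entropy} with the standard perturbation identity for a simple leading eigenvalue gives $h(\mu)=\int\log T'\,d\mu=-P'(1)$. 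It thus suffices to produce rigorous, extremely tight enclosures of $P$ near $s=1$ and to turn them into an enclosure of $P'(1)$.

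For the first task I would, at each sampled value of $s$, take as approximate eigenfunction $f=f_s>0$ the leading eigenfunction of the finite-rank Lagrange--Chebyshev approximant of $\mathcal{L}_s$ (a Chebyshev expansion on each of the three intervals, coupled through the Markov structure), whose exponential convergence in the rank to the true eigenfunction is guaranteed by~\cite{BS}. I would then bound $(\mathcal{L}_s f_s)/f_s$ from above and below using the enhanced Lagrange--Chebyshev-plus-ball-arithmetic scheme of Section~\ref{sec:rigorous_ineq}, and invoke the key estimate Theorem~\ref{thm:bounds_h} to sandwich $\lambda(s)$, hence $P(s)$, between explicit rationals; taking the interpolation order large and the working precision high makes this enclosure as tight as needed. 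One must also verify rigorously that each $f_s$ is strictly positive on the intervals, which is exactly what the explicit Chebyshev error bounds are for.

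To extract $P'(1)$ I would use that $s\mapsto\lambda(s)$ extends analytically to a complex disc about $s=1$ (perturbation theory for the simple leading eigenvalue), so that an a priori enclosure of $\lambda$ on that disc — again obtained from Theorem~\ref{thm:bounds_h} — together with Cauchy's estimate bounds the derivatives $P^{(k)}$ uniformly there. A rigorous symmetric finite-difference formula, $P'(1)=\big(P(1-\delta)-P(1+\delta)\big)/(2\delta)+R$ with $|R|\le C\delta^2$ (or a higher-order stencil with a correspondingly smaller remainder), then converts the enclosures of $P$ into an interval for $-P'(1)=h(\mu)$ narrow enough to read off the $50$ displayed digits. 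An alternative that avoids finite differences is to bound $\lambda'(1)$ directly by applying the min-max to the differentiated operator $\dot{\mathcal{L}}_s$ paired against an approximate left eigenfunction.

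The main obstacle is this differentiation step: the min-max principle controls the leading \emph{eigenvalue}, not its derivative, so the finite difference trades a truncation error of order $\delta^2$ against a propagated error of order $\varepsilon/\delta$ coming from the per-point enclosure accuracy $\varepsilon$. To certify $50$ correct digits of $h(\mu)$ one must therefore compute $P(1\pm\delta)$ to appreciably more than $50$ digits, which forces a large interpolation order and high-precision ball arithmetic, and one must control the accumulated round-off tightly enough to guarantee it never reaches the claimed digits. Verifying positivity of the approximate eigenfunctions $f_s$ uniformly in $s$ over the sampling interval is the remaining delicate point.
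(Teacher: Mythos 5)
Your overall outline (pressure function, Lagrange--Chebyshev approximate eigenfunctions, min-max to bound $P$ at nearby real parameters, ball arithmetic for rigour) follows the paper's strategy, but your method for extracting the derivative $P'$ departs from the paper's and contains a genuine gap.

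You propose a rigorous symmetric finite difference $P'(1)=\bigl(P(1-\delta)-P(1+\delta)\bigr)/(2\delta)+R$ with $|R|\le C\delta^2$, obtaining $C$ from Cauchy estimates via two-sided enclosures of $\lambda(s)$ on a complex disc about $s=1$. But the min-max estimate (Lemma~\ref{lem:pe_bound}, Theorem~\ref{thm:bounds_h}) rests on the positivity of the transfer operator: it bounds the spectral radius only for \emph{real} parameter values, and gives no enclosure of $\lambda$ on a complex neighbourhood. So the constant $C$ controlling the finite-difference remainder is not available from the tools you have assembled, and the error budget ($\delta^2$ truncation versus $\varepsilon/\delta$ propagation) that you describe is not actually certified. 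Your suggested alternative of applying min-max to the differentiated operator $\dot{\mathcal L}_s$ has the same defect: $\dot{\mathcal L}_s$ is not a positive operator.

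The paper bypasses this problem entirely by exploiting \emph{convexity} of the pressure function. Since $t\mapsto\log\|\mathcal L_t^n\mathbf 1\|_\infty$ is convex for each $n$, so is $P$, and together with $P(0)=0$ this yields, for every $\epsilon>0$ and with no remainder term,
\[
\frac{P(-\epsilon)}{\epsilon}\;\geq\;-P'(0)=h(\mu)\;\geq\;\frac{|P(\epsilon)|}{\epsilon}
\]
(Corollary~\ref{cor:entropy_bounds}). The secant slopes are themselves rigorous one-sided bounds on $-P'(0)$; there is no truncation error to estimate, only the amplification of the per-point tightness by $1/\epsilon$, which is managed by taking $\epsilon=10^{-50}$ and computing $P(\pm\epsilon)$ to correspondingly higher precision. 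A further economy you miss: since $P(\epsilon)<0$ and $P(-\epsilon)>0$ (as $P$ is strictly decreasing), both sides of the sandwich are obtained purely from \emph{upper} bounds on $\sup_x(\mathcal L_{\pm\epsilon}g)(x)/g(x)$ (implications~\eqref{eq:pe_lower_bound} and~\eqref{eq:pe_upper_bound}), so one never needs a two-sided enclosure of either eigenvalue. If you replace your finite-difference extraction of $P'(1)$ by this convexity argument, the remainder of your outline --- choice of $f$ via the Chebyshev-rank-$m$ eigenvector, rigorous quotient bounding via interpolation of $\psi$ plus ball arithmetic, and a positivity check of $f$ --- coincides with the paper's proof.
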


The min-max method we use to obtain this new estimate is relatively undemanding on computer resources and so with more patience it would be relatively easy to get more decimal places.

\begin{example}
As an application we have a variant on Lochs' theorem, which originally compared base $10$ expansions with
continued fraction expansions of typical digits.
For a.e.~($\mu$) point $x\in (0,1)$ and $n \geq 1$ we can write
$$
\begin{aligned}
x &=  -1 + \sqrt{a_1 + \sqrt{a_2 + \sqrt{a_3 + \cdots}}}
&\text{ where }& a_1, a_2, a_3, \ldots \in \{1,2,3\}   \cr
&= \frac{b_1}{3} + \frac{b_2}{3^2} + \frac{b_3}{3^3} + \cdots
&\text{ where }& b_1, b_2, b_3, \ldots \in \{0,1,2\}   \cr
\end{aligned}
$$
and for each $n \geq 1$ we denote by $N_n(x)$ the minimal number of digits
($b_1, b_2, \ldots, b_{N_n(x)}$) in base $3$ expansion which are needed to
specify the first $n$ digits ($a_1, a_2, \ldots, a_n$) in the radical expansion.
The following is a simple consequence of Theorem \ref{thm:entropy}.

\begin{corollary}
The quantity $\lim_{n\to +\infty} (N_n/n)$ evaluates to
$$1.0400442024\,5603917587\,6064437681\,7446547515\,185552379 \ldots,$$
accurate to the number of decimal places presented.
\end{corollary}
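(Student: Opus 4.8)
The plan is to establish the Lochs-type identity
\[
\lim_{n\to\infty}\frac{N_n(x)}{n}=\frac{\log 3}{h(\mu)}\qquad\text{for }\mu\text{-almost every }x,
\]
which does not itself require Theorem~\ref{thm:entropy}, and then to substitute the value of $h(\mu)$ supplied by that theorem; since $\log 3$ is available to arbitrary precision the displayed decimals follow, with precision inherited from Theorem~\ref{thm:entropy}, and as $\mu$ is equivalent to Lebesgue measure the exceptional set is Lebesgue-null, as the statement requires.

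To prove the identity I would run the classical argument behind Lochs' theorem, comparing the two cylinder structures carried by a typical $x$. Write $\Delta_k(x)$ for the interval of points whose first $k$ radical digits agree with those of $x$. Since $T$ is piecewise analytic, Markov and expanding (with $\inf T'\ge 2$), a standard bounded-distortion estimate gives $|\Delta_k(x)|\asymp 1/|(T^k)'(x)|$ uniformly in $k$ and $x$; taking logarithms, applying Birkhoff's ergodic theorem to $\log T'$ --- which belongs to $L^{\infty}(\mu)$ since $T'(x)=2(x+1)$ is bounded away from $0$ and $\infty$ --- and invoking the Rokhlin formula \eqref{eq:entropy}, one obtains
\[
-\frac1k\log|\Delta_k(x)|\ \longrightarrow\ \int\log T'\,d\mu=h(\mu)\qquad\text{for }\mu\text{-a.e.\ }x .
\]
The base-$3$ cylinders, by contrast, are plain intervals of length exactly $3^{-m}$. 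By definition $N_n(x)$ is the threshold level at which one of these two families refines the level-$n$ cylinder of the other, so, up to the error coming from the position of $x$ inside the comparison intervals, it is characterised by $e^{-N_n(x)\,h(\mu)+o(N_n(x))}\asymp 3^{-n}$, which yields $N_n(x)\,h(\mu)=n\log 3+o(n)$ and hence $N_n(x)/n\to\log 3/h(\mu)$.

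The point requiring genuine care --- and the one I expect to be the main obstacle --- is to confirm that this correction is truly $o(n)$ for $\mu$-a.e.\ $x$: \emph{a priori}, if $x$ sat atypically close to an endpoint of a comparison cylinder, the required refinement level could overshoot. This is handled by a Borel--Cantelli argument: for each fixed $\varepsilon>0$, the set of $x$ whose distance to the nearest endpoint of its level-$k$ cylinder is less than $e^{-\varepsilon k}$ times the length of that cylinder has $\mu$-measure summable in $k$ (using $\mu\sim\mathrm{Leb}$, bounded distortion, and that there are only finitely many branch points), so $\mu$-almost every $x$ eventually avoids all such bad events, which pins the correction down to $o(n)$. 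With this in place the identity follows, and substituting the value of $h(\mu)$ from Theorem~\ref{thm:entropy} completes the proof.
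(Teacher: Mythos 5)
Your overall plan is a legitimate and more detailed alternative to the paper's own argument: the paper simply cites the Lochs/Shannon--McMillan--Breiman framework and reads off ``the ratio of the entropies'', whereas you unfold that into a direct proof via bounded distortion for cylinder lengths, Birkhoff applied to $\log T'$ together with Rokhlin's formula, and a Borel--Cantelli step to control the position of $x$ within its cylinders. That is the right machinery, and the Borel--Cantelli point is exactly the one that needs care.

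There is, however, a genuine error in the cylinder comparison, and it reverses the answer. In your display
\[
e^{-N_n(x)\,h(\mu)+o(N_n(x))}\asymp 3^{-n}
\]
the indices are attached to the wrong expansions. $N_n$ counts \emph{base-$3$} digits, so the cylinder that must be small enough to pin down $a_1,\dots,a_n$ is the base-$3$ cylinder of order $N_n$, of length $3^{-N_n}$; and $n$ counts \emph{radical} digits, so the cylinder it must refine is $\Delta_n(x)$, of length $\asymp e^{-n h(\mu)}$. The refinement condition is therefore $3^{-N_n}\asymp e^{-n h(\mu)+o(n)}$, giving $N_n\log 3=n\,h(\mu)+o(n)$ and hence $N_n/n\to h(\mu)/\log 3\approx 0.9615$, not $\log 3/h(\mu)$. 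This is also what the entropy heuristic demands: since $h(\mu)<\log 3$, base-$3$ digits carry more information per digit than radical digits, so specifying $n$ radical digits should require \emph{fewer} than $n$ base-$3$ digits, forcing the limit to be strictly below $1$. Your derivation only reproduces the displayed numerical value because this index swap cancels against the inversion in the stated answer; once the swap is corrected, your argument yields $h(\mu)/\log 3$, which contradicts the corollary as written (and the paper's phrase ``$\log(3)/h(\mu)$''). You should flag that the corollary appears to state the reciprocal of the correct limit, rather than adjust your cylinder comparison to match it.
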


\begin{proof}
The proof of Lochs' theorem is based on the Shannon-McMillan-Breman theorem \cite{lochs}, generalized to the present example to give that the limit exists and is equal to the ratio of the entropies of the two maps, that is, $\log(3) / h(\mu) $,
which corresponds to the value stated.
\end{proof}

\end{example}

In the following two subsections we will provide further applications
of our method, presenting estimates for several other quantities
related to the Bolyai-R\'enyi map.

\subsection{Frequency of digits}
We consider the frequency  of digits that occur in typical expansions.  In particular,
we can define 
$$
f_i (x):= \lim_{n \to +\infty}
\frac{1}{n}
\#\{1 \leq k \leq n : a_k(x) = i\} \quad \text{ for } i = 1,2,3
$$
whenever  the limit exists. By ergodicity of $T$ the limit exists and is constant for a.e.~($\mu$)
$x \in [0,1)$ and we simply denote it $f_i$.

\begin{theorem} \label{thm:freqs} The digit frequencies are
\begin{align*}
f_1 &= 0.4640796294\,4716719166\,0214542662\,4296264246\,0872990983\ldots, \\
f_2 &= 0.3044190449\,4046044774\,3959549801\,4270582974\,2520632111\ldots, \\
f_3 &= 0.2315013256\,1237236059\,5825907536\,1433152779\,6606376905\ldots,
\end{align*}
accurate to the number of decimal places presented.
\end{theorem}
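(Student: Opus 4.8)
The plan is to express each digit frequency as the derivative of a pressure function at a base point, and then to estimate that derivative with the same Lagrange--Chebyshev min-max algorithm used to prove Theorem~\ref{thm:entropy}. First, by the Birkhoff ergodic theorem, and since $a_k(x)=i$ precisely when $T^{k-1}(x)$ lies in the $i$-th branch interval $I_i$ (so $I_1=[0,\sqrt2-1)$, $I_2=[\sqrt2-1,\sqrt3-1)$, $I_3=[\sqrt3-1,1]$), the frequency $f_i$ equals $\mu(I_i)$ for $\mu$-a.e.\ $x$; as $\mu$ is a probability measure this already forces $f_1+f_2+f_3=1$, so it suffices to estimate two of them (and one notes that the three stated values do sum to $1$). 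Next I would realise $\mu(I_i)$ thermodynamically: letting $P_i(t)$ be the pressure of the potential $-\log T'+t\,\mathbf 1_{I_i}$, one has $P_i(0)=0$ and $P_i'(0)=\int \mathbf 1_{I_i}\,d\mu=\mu(I_i)=f_i$, because the equilibrium state of $-\log T'$ is exactly the Bolyai--R\'enyi measure $\mu$.

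The gain from this reformulation is that $e^{P_i(t)}$ is the leading eigenvalue $\lambda_i(t)$ of a transfer operator of the type handled by our method. Writing $\psi_1,\psi_2,\psi_3$ for the (full-branch, Markov) inverse branches of $T$, the operator with weight $e^{-\log T'+t\,\mathbf 1_{I_i}}$ is
\[
\mathcal L_{i,t}g \;=\; e^{t}\,\frac{g\circ\psi_i}{|T'\circ\psi_i|} \;+\; \sum_{j\neq i}\frac{g\circ\psi_j}{|T'\circ\psi_j|},
\]
that is, $\mathcal L$ with the $i$-th branch reweighted by the constant $e^{t}$; this collapse of the non-analytic weight $\mathbf 1_{I_i}$ to a branch-dependent constant is exactly what the full-branch Markov structure of $T$ (Section~\ref{sec:BR}) buys us. Each $\mathcal L_{i,t}$ is then a positive analytic transfer operator to which Theorem~\ref{thm:bounds_h} applies, with $\mathcal L_{i,0}=\mathcal L$ and $\lambda_i(0)=1$. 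I would therefore run the algorithm of \S\ref{sec:algo_section} to obtain tight two-sided enclosures of $\lambda_i(t)$ at a few values of $t$ near $0$ and extract $\lambda_i'(0)=f_i$: convexity of $t\mapsto P_i(t)$ already gives crude two-sided bounds on $P_i'(0)$ from three eigenvalue enclosures, and these are then sharpened by a higher-order finite-difference formula together with rigorous bounds on the higher $t$-derivatives of $\lambda_i$ (which are cheap here, the $t$-dependence being through the single entire function $e^{t}$). Substituting the enclosures of $f_1$ and $f_2$ and putting $f_3=1-f_1-f_2$ gives the stated digits.

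The main obstacle will be this last step --- converting rigorous eigenvalue enclosures at finitely many parameter values into an enclosure of $P_i'(0)$ tight to fifty places. This requires computing $\lambda_i(t)$ to substantially higher working precision than the target, to absorb the cancellation inherent in any finite-difference formula, and controlling the remainder of that formula by an explicit bound on $P_i''$ or $P_i'''$; the exponential convergence of the Lagrange--Chebyshev scheme in the rank makes the former inexpensive, and the needed derivative bounds follow from Cauchy estimates for $\lambda_i$ on a fixed disc in the complex $t$-plane. Everything else --- analyticity of the inverse branches on a complex neighbourhood of $[0,1]$, positivity, and the remaining hypotheses of Theorem~\ref{thm:bounds_h} --- is immediate, just as in the proof of Theorem~\ref{thm:entropy}.
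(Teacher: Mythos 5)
Your reformulation of $f_i$ as a pressure derivative matches the paper's: by the Birkhoff ergodic theorem $f_i=\mu(I_i)$, and the paper introduces operators $\mathcal N_{i,t}$ in which, exactly as in your $\mathcal L_{i,t}$, the $i$-th branch of $\mathcal L_0$ is reweighted by a $t$-dependent constant, the full-branch Markov structure letting the indicator potential $\mathbf 1_{I_i}$ collapse to that constant. An analogue of Lemma~\ref{lem:pressure} then gives $R_i'(0)=\mp f_i$, and an analogue of Lemma~\ref{lem:pe_bound} turns the min-max bound on $\sup(\mathcal N_{i,t}g)/g$ into a rigorous bound on $R_i(t)$. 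Where you and the paper part ways is the final numerical-differentiation step. You propose a higher-order finite-difference formula in $t$ with Cauchy estimates on the holomorphic eigenvalue $\lambda_i(t)$ controlling the remainder, treating the two-point convexity enclosure as merely a ``crude'' seed. The paper instead repeats the entropy argument verbatim: it invokes Corollary~\ref{cor:r_bounds} (the analogue of Corollary~\ref{cor:entropy_bounds}), evaluates $R_i(\pm\epsilon)$ via Algorithm~\ref{alg:compute_bounds} at the single very small $\epsilon=10^{-50}$, and divides. That first-order convexity bound is not crude in this regime, because the eigenvalue enclosures are tight to roughly $10^{-100}$, so dividing by $\epsilon=10^{-50}$ still leaves fifty correct digits --- no bound on $R_i''$ or $R_i'''$ is ever computed. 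Your scheme would also be rigorous and is arguably more economical in working precision, but it adds the Cauchy/remainder bookkeeping the paper deliberately avoids, whereas the paper's point is that one uniform, derivative-free argument handles entropy, digit frequencies, and (with a trivial modification) Hausdorff dimension alike. One small economy you mention and the paper does not use: since $f_1+f_2+f_3=1$, computing two of the three enclosures suffices.
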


These are an improvement on the accuracy of previous estimates in \cite[Theorem 2]{jp},
which were computed up to approximately $\pm 10^{-7}$.

\subsection{Deleted digits}
Another interesting problem is to consider the Cantor set whose radical expansions use only the digits $1$ and $3$, i.e., the ``deleted digit'' Cantor set 
$$
\mathcal E = \left\{
x = -1 
+ \sqrt{a_1 + \sqrt{a_2 + \sqrt{a_3 + \cdots}}} : a_1, a_2, a_3, \ldots \in \{1,3\}
\right\}.
$$

\begin{theorem} \label{thm:hdim}
The Hausdorff dimension of $\mathcal E$ is
\begin{align*}
\dim_H(\mathcal E) =
0.6439131204\,7072945768\,7895134656\,7617073899\,0093573261\ldots,
\end{align*}
accurate to the number of decimal places presented.
\end{theorem}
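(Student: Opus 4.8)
The plan is to express $\dim_H(\mathcal E)$ as the unique zero of a pressure function and then apply the Lagrange--Chebyshev min-max algorithm developed in \S\ref{sec:algo_section}. First I would set up the appropriate dynamical framework: the set $\mathcal E$ is the attractor of the iterated function system $\{S_1, S_3\}$ consisting of the two inverse branches of $T$ corresponding to the digits $1$ and $3$, namely $S_i(y) = -1 + \sqrt{y + i}$. These are contractions on a suitable interval, they are real-analytic, and they satisfy the open set condition, so by the Bowen--Ruelle formula $\dim_H(\mathcal E) = s$, where $s$ is the unique real number for which the topological pressure $P(-s\log|T'|)$ of the restricted system vanishes. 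Equivalently, $s$ is characterized by $\lambda(s) = 1$, where $\lambda(s)$ is the leading eigenvalue of the (positive) transfer operator $\mathcal L_s$ acting as $(\mathcal L_s g)(y) = \sum_{i\in\{1,3\}} |S_i'(y)|^s\, g(S_i(y))$ on an appropriate space of analytic functions; here one uses that $s\mapsto\lambda(s)$ is continuous and strictly decreasing.

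Next I would reduce the problem to bracketing $s$. Since $\lambda(s)$ is strictly decreasing, to certify that $s\in[s_-,s_+]$ it suffices to show $\lambda(s_-) > 1$ and $\lambda(s_+) < 1$. For each of these two fixed parameter values the operator $\mathcal L_{s_\pm}$ is an honest positive transfer operator of exactly the type treated in \S\ref{sec:transfer_op}, so I can invoke the min-max estimate of Theorem \ref{thm:bounds_h}: choosing an approximate leading eigenfunction $f$ via the finite-rank Lagrange--Chebyshev operator $\mathcal L_{s_\pm,n}$ and then rigorously bounding $(\mathcal L_{s_\pm}f)/f$ above and below using ball arithmetic on a single derivative of the quotient, exactly as in \S\ref{sec:rigorous_ineq}. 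Iterating a bisection on the parameter (at each step certifying the sign of $\lambda(s)-1$ by the min-max bounds) narrows the bracket for $s$; taking $n$ large enough and the bracket narrow enough delivers the claimed forty digits.

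The main obstacle I anticipate is uniformity and bookkeeping of the error bounds as the parameter $s$ varies, rather than any single conceptual point. Two technical wrinkles deserve care: (i) one must fix a complex neighbourhood (a Bernstein ellipse, or a suitable disc) on which the branches $S_i$ extend holomorphically and map into themselves with the contraction needed for the exponential convergence results of \cite{BS} to apply — the non-integer exponent $s$ introduces the factor $|S_i'|^s$, a branch of a power, so one checks that $S_i'$ stays in a region where this power is single-valued and analytic on the chosen domain, which is straightforward since $S_i'>0$; (ii) near the endpoint $x=1$ the branch $S_3$ has derivative approaching its extremal value and the relevant intervals abut, so I would verify the Markov/open-set structure there explicitly. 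Once these are in place, the numerical certification is entirely parallel to the proofs of Theorems \ref{thm:entropy} and \ref{thm:freqs}, and the stated value follows by running the algorithm of \S\ref{sec:algorithm_impl} at sufficiently high rank and precision.
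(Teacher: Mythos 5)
Your proposal follows essentially the same route as the paper: identify $\dim_H(\mathcal E)$ via Bowen's theorem as the zero of the pressure function for the two-branch subsystem, certify the sign of $Q(t)$ at fixed parameter values by the min-max principle (both a lower and an upper bound on $(\mathcal M_t f)/f$, as in Remark~\ref{rem:sup_inf} and Corollary~\ref{cor:dimh}), and bisect to bracket the zero — exactly Algorithm~\ref{alg:compute_zero}. The only small imprecision is that you cite Theorem~\ref{thm:bounds_h}, which is the derivative-at-zero estimate for the entropy; the relevant statements here are its analogues Lemma~\ref{lem:mt_bounds} and Corollary~\ref{cor:dimh}, which you otherwise correctly reconstruct, including the key point that one needs a two-sided bound on the quotient rather than two upper bounds.
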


Similar problems have attracted interest in the context of continued fractions
\cite{good,PV1} and this result might be considered as an analogy.

\section{The transfer operators \texorpdfstring{$\mathcal L_t$}{}}\label{sec:transfer_op}

We associate to $T$ the three  inverse branches $T_1, T_2, T_3\colon [0, 1] \to [0, 1]$ given by
$$
T_i(x) = \sqrt{i + x} - 1 \quad \text{ for } i=1,2,3
$$
(i.e., $T\circ T_i(x) = x$ for all $x\in [0,1)$ and $i=1,2,3$)
and observe by direct computation that 
$$
T_i'(x) =  \frac{1}{2(T_i(x) + 1)} \quad \text{ for } i=1,2,3.
$$

Let $C([0,1])$ denote the Banach space of continuous functions $f\colon[0,1] \to \mathbb R$
with the supremum norm $\|f\|_\infty = \sup_{x\in [0,1]}|f(x)|$.
We recall the following well-known result.

\begin{lemma}[Lasota-Yorke \cite{ly}]
The absolutely continuous $T$-invariant  measure $\mu$ on $[0,1]$  satisfies 
$h(x) := \frac{d\mu}{dx} \in C([0,1])$, where $h$ is the eigenvector associated to the maximal eigenvalue $1$
for the linear operator $\mathcal L_0 \colon C([0,1]) \to C([0,1])$ given by
$$
\mathcal L_0 f(x) = 
\sum_{i=1}^3|T_i'(x)| f(T_ix),
\quad \forall f \in C([0,1]), x \in [0,1].
$$
\end{lemma}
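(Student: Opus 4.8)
The plan is to recognize this as the classical Perron--Frobenius/Ruelle--Lasota--Yorke result specialized to the Bolyai--R\'enyi map, and to assemble a proof from three standard ingredients: (i) the change-of-variables identity relating $\mathcal L_0$ to pushforward of densities, (ii) quasi-compactness of $\mathcal L_0$ on a suitable function space with $1$ as a simple leading eigenvalue, and (iii) identification of the corresponding eigenfunction with the density $h = d\mu/dx$. First I would verify directly that $\mathcal L_0$ is the transfer (Perron--Frobenius) operator of $T$ with respect to Lebesgue measure: for any $f \in C([0,1])$ and any test function $g$, the substitution $y = T_i(x)$ on each branch, together with $|T_i'(x)| = 1/(2(T_i(x)+1))$ and $T \circ T_i = \mathrm{id}$, gives $\int_0^1 (\mathcal L_0 f)(x)\, g(x)\, dx = \int_0^1 f(y)\, g(Ty)\, dy$, which says exactly that $\mathcal L_0$ is the adjoint of the Koopman operator $g \mapsto g \circ T$. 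In particular a density $h \geq 0$ with $\int h\, dx = 1$ satisfies $\mathcal L_0 h = h$ if and only if the measure $h\, dx$ is $T$-invariant.

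Next I would invoke the Lasota--Yorke argument proper. Since $T$ is a piecewise $C^2$ (indeed piecewise analytic) expanding Markov map with $\inf_x |T'(x)| \geq 2 > 1$ on each of the three full branches, the classical Lasota--Yorke inequality holds: there exist constants $0 < \theta < 1$ and $B < \infty$ such that $\mathrm{Var}(\mathcal L_0^n f) \leq \theta^n \mathrm{Var}(f) + B\|f\|_{L^1}$ for all $f$ of bounded variation. By the standard Ionescu-Tulcea--Marinescu / Hennion argument this makes $\mathcal L_0$ quasi-compact on $BV([0,1])$ with spectral radius $1$ (the operator preserves integrals, so $1$ is in the spectrum), and the Markov/mixing property of $T$ — the three branches map onto all of $[0,1]$, so some power of the transition matrix is strictly positive — forces $1$ to be a simple eigenvalue with no other eigenvalue on the unit circle. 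The associated eigenfunction $h$ can be taken nonnegative with $\int h\, dx = 1$; then $h\, dx$ is the absolutely continuous invariant probability measure, and its uniqueness follows from simplicity of the eigenvalue. Finally, to upgrade the regularity from $BV$ to $C([0,1])$ as claimed in the statement, I would observe that $h = \mathcal L_0 h$ and that $\mathcal L_0$ maps $BV$ into $C([0,1])$: each inverse branch $T_i$ is a $C^1$ diffeomorphism onto $[0,1]$, so each term $|T_i'(x)|\, h(T_i x)$ is, after the interpolation/bootstrapping argument, continuous — more precisely, one iterates once more and uses that $h \in BV \cap L^\infty$ composed with the smooth contractions $T_i$ and multiplied by smooth weights yields a continuous function (the only possible discontinuities of $h$ would sit at the countable set of preimages of the branch endpoints, but the Markov structure ensures the endpoints are mapped consistently so these cancel).

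The main obstacle is the last point: the bare Lasota--Yorke theorem only delivers a $BV$ density, whereas the statement asserts $h \in C([0,1])$, so one must argue the extra continuity. For a general piecewise expanding map this can genuinely fail at the critical/endpoint orbits, and the clean way to handle it here is to exploit the Markov property — the partition points $\sqrt{2}-1$ and $\sqrt{3}-1$ are mapped to the endpoints $0$ and $1$ of the full interval, the inverse branches $T_i$ are defined and analytic on all of $[0,1]$, and consequently the potential discontinuities of $\mathcal L_0 h$ occur only where the pieces of the sum are glued, where they match up. An alternative and arguably cleaner route, which also anticipates the analytic machinery used later in the paper, is to note that the $T_i$ extend to holomorphic contractions of a complex neighbourhood of $[0,1]$, so $\mathcal L_0$ acts on a space of holomorphic functions as a nuclear (trace-class) operator; its leading eigenfunction is then automatically real-analytic on $[0,1]$, in particular continuous, and coincides with the $BV$ density by uniqueness. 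I would present the Markov-cancellation argument as the main line and mention the analytic viewpoint as a remark, since it is exactly the structure the rest of the paper relies on. Modulo this regularity point, everything else is a direct citation of \cite{ly} together with the explicit formula for $T_i'$ computed above.
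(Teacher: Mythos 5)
The paper offers no proof of this lemma: it is stated as ``the following well-known result'' with only a citation to Lasota--Yorke \cite{ly}, so there is no paper argument against which to compare your sketch line-by-line. Your plan is, in broad outline, the correct one, and you are in fact more careful than the paper on one genuine point: the Lasota--Yorke theorem as such produces an invariant density of bounded variation, and the upgrade to $h \in C([0,1])$ asserted in the statement is \emph{not} contained in the cited reference and does need a separate argument. You are right to flag this.

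Where the sketch is weak is in the ``Markov cancellation'' argument you propose as your main route to continuity. The claim that potential discontinuities of $h\circ T_i$ ``match up so these cancel'' at the gluing points is not a proof; for a jump of $h$ at an interior point $y_0$ lying in the range of a single branch $T_i$, the term $|T_i'|\,h\circ T_i$ has a jump at the interior point $T_i^{-1}(y_0)=T(y_0)$, and nothing in the Markov structure per se forces the other branches to supply a cancelling jump there. What does work is a jump-mass estimate: if $h\in BV$, $h=\mathcal L_0 h$, and $J(x)$ denotes the jump of $h$ at $x$, then from $h(x)=\sum_i |T_i'(x)|\,h(T_ix)$ and $|T_i'|\le 1/2$ one gets $\sum_i J(T_ix_0)\ge 2\,J(x_0)$; iterating over preimage trees (whose points are distinct for this full-branch map) gives $2^n J(x_0)\le \operatorname{Var}(h)$ for all $n$, forcing $J(x_0)=0$. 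That is the honest elementary version of your cancellation heuristic, and it uses the expansion $|T_i'|\le 1/2$, not the Markov matching of endpoints. Alternatively, your ``cleaner route'' is entirely correct and is also the one best aligned with the rest of the paper: the inverse branches $T_i$ extend to holomorphic contractions of a Bernstein ellipse, so $\mathcal L_0$ is nuclear on a space of holomorphic functions (cf.\ the discussion around Lemma~\ref{lem:pressure} and \cite{BS}), its leading eigenfunction is real-analytic, and uniqueness of the a.c.i.m.\ identifies it with $d\mu/dx$. I would promote that to the main line and drop or repair the cancellation heuristic; the change-of-variables identification of $\mathcal L_0$ as the transfer operator and the quasi-compactness/simplicity discussion via the Lasota--Yorke inequality are both fine as stated.
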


More generally,  we can consider the  following parameterized
{\it family} of bounded linear operators.

\begin{definition}
For each  $t\in \mathbb R$ we 
let 
 $\mathcal L_t \colon C([0,1])\to C([0,1])$ denote the bounded linear operator
 given by
$$
\mathcal L_t f(x) =
\sum_{i=1}^3|T_i'(x)|^{1+t} f(T_ix),
\quad \forall f \in C([0,1]), x \in [0,1].
 $$
\end{definition}

We can denote the spectral radius of $\mathcal L_t$ by $e^{P(t)}$.   The following formulation follows from the
spectral radius theorem.

\begin{definition}
We can associate the \emph{pressure function} $P\colon \mathbb R \to \mathbb  R$ defined by
$$
P(t) =  \lim_{n\to +\infty} \frac{1}{n} \log \|\mathcal L_t ^n1\|_\infty,
$$
where $1(x) = 1$ is the constant function.
\end{definition}

As will be seen from the next lemma, the limit exists, and the pressure function
defined this way has certain properties following from the transformation $T$
being expanding and differentiable. We note that there are alternative
definitions of pressure, e.g.~via the variational principle applied to the function $-t \log |T'|$, see \cite{walters}.

\begin{lemma} \label{lem:pressure}
The function $P\colon \mathbb R \to \mathbb R$ has the following properties:
\begin{enumerate}
\item $P(0) = 0$. 
\item 
$P$ is $C^1$ (even real analytic), monotone decreasing and convex.
\item $\frac{d}{dt}P(t)|_{t=0} = - h(\mu)$.
\end{enumerate}
\end{lemma}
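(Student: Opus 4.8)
The plan is to deduce all three properties from standard thermodynamic-formalism facts about the transfer operators $\mathcal L_t$ associated to the expanding Markov map $T$. First I would record that each $\mathcal L_t$ acts on a suitable space (continuous functions, or better, a space on which it is quasi-compact with a spectral gap, such as functions of bounded variation or—exploiting analyticity of the branches $T_i$—a space of holomorphic functions on a complex neighbourhood of $[0,1]$). Positivity of the kernel $|T_i'|^{1+t}$ together with the Ruelle–Perron–Frobenius theorem then gives that $e^{P(t)}$ is a simple isolated leading eigenvalue with a strictly positive eigenfunction, and that the limit defining $P(t)$ exists and coincides with $\log$ of this eigenvalue (the norm $\|\mathcal L_t^n 1\|_\infty$ is squeezed between constant multiples of $e^{nP(t)}$ because $1$ has nonzero component along the positive eigenfunction). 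This also identifies $P(t)$ with the usual topological pressure of the potential $-t\log|T'|$ via the variational principle, which is the bridge to items (2) and (3).

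For item (1): when $t=0$ the operator is $\mathcal L_0$, whose leading eigenvalue is $1$ with eigenfunction $h=d\mu/dx$ by the Lasota–Yorke lemma quoted above; hence $P(0)=\log 1 = 0$. For item (2): real analyticity of $t\mapsto P(t)$ follows from analytic perturbation theory (Kato) applied to the analytic family $t\mapsto \mathcal L_t$ of operators with a simple isolated leading eigenvalue, so $P$ is in particular $C^1$. Convexity is the standard convexity of pressure in the potential: $P(t)$ is the pressure of $-t\log|T'|$, and $t\mapsto P(\text{linear function of }t)$ is convex by Hölder's inequality (or by the variational principle, as a supremum of affine functions $t\mapsto h(T,m) - t\int \log|T'|\,dm$ over invariant measures $m$). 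Monotonicity: since $\inf_x T'(x)\ge 2>1$, we have $\log|T'|\ge \log 2>0$ everywhere, so the affine functions in the variational principle all have strictly negative slope, forcing $P$ to be strictly decreasing. For item (3): differentiating the variational principle at $t=0$, the supremum is attained uniquely at the equilibrium state, which at $t=0$ is exactly $\mu$; the envelope theorem then gives $P'(0) = -\int \log|T'|\,d\mu = -h(\mu)$, the last equality being the Rokhlin formula \eqref{eq:entropy}. Alternatively one can differentiate the eigenvalue equation $\mathcal L_t h_t = e^{P(t)} h_t$ directly, pairing with the leading left eigenfunctional and using $\partial_t \mathcal L_t = \mathcal L_t(\log|T'|\,\cdot\,)$ evaluated at $t=0$.

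The main obstacle is not any single computation but assembling the spectral picture cleanly: one must justify that $\mathcal L_t$ genuinely has a \emph{simple, isolated} leading eigenvalue on a space where analytic perturbation theory applies, so that $P$ inherits analyticity and the eigenvalue derivative formula is legitimate. For piecewise-analytic expanding Markov maps this is classical (it is exactly the setting in which the Lagrange–Chebyshev approximation results cited in the introduction operate), so I would cite the relevant Ruelle/Mayer references rather than reprove quasi-compactness; the remaining steps—evaluating at $t=0$, convexity via Hölder, monotonicity from $\inf T'\ge 2$, and the derivative via Rokhlin—are then short.
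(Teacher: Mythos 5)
Your proposal is correct, and the argument runs along broadly parallel lines to the paper's, but it diverges on two of the three sub-claims in a way worth flagging. For convexity, the paper proves it directly from the observation that $t\mapsto\|\mathcal L_t^n 1\|_\infty$ is convex for each $n$ (a sum of terms of the form $a_i e^{-t\,b_i}$ with $a_i,b_i>0$, hence convex, and a pointwise limit of convex functions is convex); you instead invoke the variational-principle representation of $P(t)$ as a supremum of affine functions of $t$, or Hölder. Both are standard and correct, but the paper's route is self-contained within the operator framework it has already set up, whereas yours leans on the variational principle, which the paper explicitly mentions only as an ``alternative definition'' of pressure and does not develop. For monotonicity, the paper tersely says it ``follows from the negative derivative in the next part''---i.e.\ that $P'(t)=-\int\log|T'|\,d\mu_t<0$ for every $t$, by the same perturbation calculation as at $t=0$ with the equilibrium state $\mu_t$ in place of $\mu$; your derivation from the variational principle together with $\log|T'|\ge\log 2>0$ is arguably cleaner and more explicit, and establishes strict monotonicity without implicitly appealing to part (3) at all parameters. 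For part (3) you offer both the envelope-theorem argument and the direct differentiation of $\mathcal L_t h_t=e^{P(t)}h_t$ paired against the left eigenfunctional; the latter is exactly what the paper does. In short: same skeleton (Ruelle--Perron--Frobenius spectral gap plus Kato perturbation theory for analyticity), but your convexity and monotonicity arguments substitute the variational principle for the paper's more operator-theoretic computations, which is a legitimate and perhaps pedagogically clearer alternative, at the cost of importing machinery the paper chose not to rely on.
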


\begin{proof}
The first part follows from bounds on $\| \mathcal L_0^n 1\|_\infty$, for $n \geq 0$.

For the second part, the  value  $e^{P(t)}$ for   the spectral radius of
$\mathcal L_t$ occurs as a maximal simple eigenvalue for the transfer operator
$\mathcal L_t\colon C^1([0, 1]) \to C^1([0, 1])$
 acting on the Banach space of $C^1$ functions $f\colon [0, 1] \to \mathbb R$ with the norm $\|f\| := \|f\|_\infty +  \|f'\|_\infty$
 (Ruelle Operator  Theorem, see \cite{pp}).
 The analyticity of $P(t)$ then follows by perturbation theory  applied to the maximal eigenvalue of $\mathcal L_t$, see \cite{pp, ruelle}.
 The monotone decreasing property follows from the negative derivative in the next part.
 Convexity follows from the observation that for each $n \geq 1$ the function $t \mapsto \|\mathcal L_t^n 1\|_\infty$ is convex.
 
For the third part, let $h_t \in C^1([0, 1])$ be an eigenfunction corresponding
to the maximal eigenvalue of $\mathcal L_t$ for $t \in \mathbb R$,
and write $h'_t(x) = \frac{d}{dt}h_t(x)$.
By analytic perturbation theory \cite{kato},
we can differentiate the eigenvalue equation
$\mathcal L_t h_t =  e^{P(t)} h_t$ at $t=0$ to write
  $$\mathcal L_0h_0' - \mathcal L_0(\log|T'| h_0)  =  e^{P(0)}h_0' + P'(0) h_0.$$  We can then integrate with respect to the measure $\nu_0 = \mathcal L_0^*\nu_0$ to write
  \begin{align}\label{eq:pressure_der}
  P'(0) = -  \nu_0(\log|T'| h_0) = - \mu(\log|T'|).
  \end{align}
  Finally, the Rokhlin identity gives $h(\mu) = \mu(\log|T'|)$.
\end{proof}

We will make use of part 3 of Lemma \ref{lem:pressure}, despite the unpromising formulation,
to  bound the entropy $h(\mu)$ by deriving inequalities on the  derivative of $P(t)$ at $t=0$.
In particular, using elementary calculus,
we can estimate $\frac{d}{dt}P(t)|_{t=0} = -h(\mu)$
from estimates of the two values of pressure $P(\epsilon)$ and $P(-\epsilon)$
for a fixed $\epsilon > 0$.

\begin{figure}
\begin{center}
\begin{tikzpicture}[thick,scale=0.8, every node/.style={scale=0.8}]
\draw[->] (0,0) --(10,0);
\draw[dashed, <->] (7.2,0.2)--(4,0.2);
\draw[dashed, <->] (1,-0.2)--(4,-0.2);
\draw[dashed, <->] (1,2.0)--(1,0);
\draw[dashed, <->] (7.2,-0.8)--(7.2,0);
\draw[thick] (1.3,1.1)--(7.2,-1.2);
\draw[thick,dashed] (1,2.0)--(4.0,0);
\draw[thick,dashed] (4.0,0)-- (7.2,-0.8); 
\node at (0.1,  1.0) {$P(-\epsilon)$};
\node at (8.2,  -0.4) {$|P(\epsilon)|$};
\node at (2.5,  -0.5) {$\epsilon$};
\node at (5.7,  0.5) {$\epsilon$};
\draw[red](-1,5) .. controls (1,1)  and (3,0) .. (8,-1.0);
\node at (4.3,  -0.5) {$0$};
\node at (0.5,  3.9) {$P(t)$};
\node at (9.5,  0.5) {$t$};
\node at (6,  -1.4) {slope $-h(\mu)$};
\end{tikzpicture}
\end{center}
\caption{The pressure function $P(t)$ and the inequalities in Corollary \ref{cor:entropy_bounds}.}
\label{fig:pressure}
\end{figure}
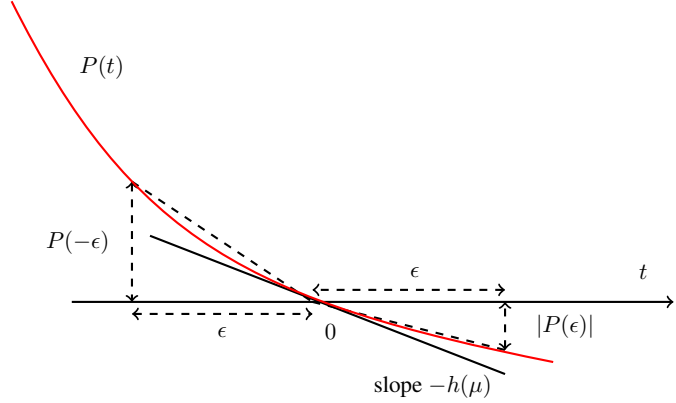

By convexity we can (rigorously) bound the entropy (cf. Figure \ref{fig:pressure}):
\begin{corollary} \label{cor:entropy_bounds}
For any $\epsilon > 0$ we have the bounds
\begin{equation} \label{eq:h_eps_bounds}
\frac{P(-\epsilon)}{\epsilon}
\geq  h(\mu) \geq 
\frac{|P(\epsilon)|}{\epsilon}.
\end{equation}
\end{corollary}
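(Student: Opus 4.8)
The plan is to exploit the three properties of $P$ from Lemma \ref{lem:pressure}: that $P(0)=0$, that $P$ is convex, and that $P'(0)=-h(\mu)$. The inequalities in \eqref{eq:h_eps_bounds} are exactly the statement that the slope of the tangent line to a convex function at a point lies between the slopes of the secant lines on either side. Concretely, I would proceed as follows.

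First, for the lower bound $h(\mu) \geq |P(\epsilon)|/\epsilon$: since $P$ is convex and differentiable, the graph of $P$ lies above its tangent line at $t=0$, so $P(\epsilon) \geq P(0) + P'(0)\epsilon = -h(\mu)\epsilon$. Because $P$ is monotone decreasing and $P(0)=0$, we have $P(\epsilon) \leq 0$, hence $|P(\epsilon)| = -P(\epsilon) \leq h(\mu)\epsilon$, and dividing by $\epsilon>0$ gives $h(\mu) \geq |P(\epsilon)|/\epsilon$. Second, for the upper bound $P(-\epsilon)/\epsilon \geq h(\mu)$: again by the tangent line inequality at $t=0$, $P(-\epsilon) \geq P(0) + P'(0)(-\epsilon) = h(\mu)\epsilon$, so dividing by $\epsilon>0$ yields $P(-\epsilon)/\epsilon \geq h(\mu)$. (Alternatively, one can phrase both bounds via the monotonicity of difference quotients of a convex function: $\frac{P(-\epsilon)-P(0)}{-\epsilon-0} \leq P'(0) \leq \frac{P(\epsilon)-P(0)}{\epsilon-0}$, and then substitute $P(0)=0$ and $P'(0)=-h(\mu)$.)

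There is essentially no obstacle here — the result is an immediate consequence of convexity plus the two computed values $P(0)=0$ and $P'(0)=-h(\mu)$. The only point requiring a word of care is the use of monotonicity (or equivalently $P(\epsilon)\leq 0$) to replace $-P(\epsilon)$ by $|P(\epsilon)|$ in the stated form of the lower bound; this is exactly where part (2) of Lemma \ref{lem:pressure} beyond mere convexity is invoked. I would keep the argument to a couple of lines, referencing Figure \ref{fig:pressure} for the geometric picture.
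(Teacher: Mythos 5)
Your proof is correct and matches the paper's (implicit) argument: the paper supplies no explicit proof beyond the remark ``by convexity'' and the accompanying figure, and your two-line tangent-line (or, equivalently, difference-quotient) argument together with the observation $P(\epsilon)\le 0$ to justify writing $|P(\epsilon)|$ is exactly what that remark is gesturing at.
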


\begin{remark}
To make the bounds in \eqref{eq:h_eps_bounds} effective, we need to choose $\epsilon > 0$ appropriately small.
On the other hand, for large values of $\epsilon$ we observe
that the bounds become the asymptotic derivatives $P'(t)$
of the pressure $P(t)$ as $t \to \pm \infty$. Interestingly these become
$$
\inf_{\nu} \nu(\log |T'|) \leq h(\mu) \leq \sup_{\nu} \nu(\log |T'|),
$$
where the supremum and infimum range over all $T$-invariant probability measures,
although these are clearly not useful for practical estimates.
\end{remark}

\section{Algorithm for estimating properties of \texorpdfstring{$P(\pm \epsilon)$}{the pressure function}}\label{sec:algo_section}
\label{sec:method}

The bounds in \eqref{eq:h_eps_bounds} may at first sight seem unpromising
since we need to choose $\epsilon > 0$ appropriately small.
Therefore it is important to find an efficient and accurate method to estimate $P(\pm\epsilon)$,
or equivalently the leading eigenvalue of $\mathcal{L}_{\pm \epsilon}$.
We shall now describe a simple min-max method to give an upper
bound on this eigenvalue.

\subsection{Min-max method to bound the top eigenvalue of \texorpdfstring{$\mathcal{L}_{\pm \epsilon}$}{the transfer operator}}
\label{sec:minmax}

We will use a ``min-max'' estimate which is based on the following lemma.

\begin{lemma} \label{lem:pe_bound}
Let $t, \alpha \in \mathbb{R}$, and assume there exists a positive continuous
function $g\colon[0,1]  \to \mathbb R^+$ such that
$\sup_x \frac{\mathcal L_{t} g(x)}{g(x)} \leq  e^\alpha$. Then $P(t) \leq \alpha$.
\end{lemma}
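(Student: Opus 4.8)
The plan is to exploit the positivity and the spectral characterisation of $\mathcal{L}_t$ to turn the pointwise bound $\mathcal{L}_t g \le e^\alpha g$ into a bound on the spectral radius $e^{P(t)}$. First I would observe that the hypothesis $\sup_x \mathcal{L}_t g(x)/g(x) \le e^\alpha$ means precisely that $\mathcal{L}_t g(x) \le e^\alpha g(x)$ for every $x\in[0,1]$, i.e.\ the function $e^\alpha g - \mathcal{L}_t g$ is nonnegative. Since $\mathcal{L}_t$ is a positive operator (it is given by a sum with nonnegative coefficients $|T_i'(x)|^{1+t}$ and positive test functions are sent to positive functions), applying it preserves the inequality: from $\mathcal{L}_t g \le e^\alpha g$ one gets $\mathcal{L}_t^2 g \le e^\alpha \mathcal{L}_t g \le e^{2\alpha} g$, and inductively $\mathcal{L}_t^n g \le e^{n\alpha} g$ for all $n\ge 1$.

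Next I would relate this to $\|\mathcal{L}_t^n 1\|_\infty$, which is how $P(t)$ is defined. Because $g$ is continuous and strictly positive on the compact interval $[0,1]$, there are constants $0 < c \le C < \infty$ with $c \le g(x) \le C$ for all $x$. Then $c\cdot 1 \le g$ pointwise, so by positivity $c\,\mathcal{L}_t^n 1 \le \mathcal{L}_t^n g \le e^{n\alpha} g \le C e^{n\alpha} 1$, giving $\|\mathcal{L}_t^n 1\|_\infty \le (C/c)\, e^{n\alpha}$. Taking logarithms, dividing by $n$, and letting $n\to\infty$,
$$
P(t) = \lim_{n\to\infty} \frac{1}{n}\log\|\mathcal{L}_t^n 1\|_\infty \le \lim_{n\to\infty} \frac{1}{n}\bigl(\log(C/c) + n\alpha\bigr) = \alpha,
$$
which is the claim. (Alternatively, one could invoke the spectral radius formula $e^{P(t)} = \lim_n \|\mathcal{L}_t^n\|^{1/n}$ together with a Collatz--Wielandt / min-max characterisation, but the direct argument via $\|\mathcal{L}_t^n 1\|_\infty$ is cleaner and uses exactly the definition given in the excerpt.)

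There is no real obstacle here; the only point requiring a little care is justifying that $\mathcal{L}_t$ is genuinely positivity-preserving and that the iterated inequality survives — both are immediate from the explicit nonnegative-kernel form of $\mathcal{L}_t$ — and the passage from $g$ to the constant function $1$, which is where compactness of $[0,1]$ and continuity and strict positivity of $g$ are used to absorb the boundary constants $c,C$ into a term that vanishes in the $\tfrac1n$ limit. I expect the write-up to be only a few lines.
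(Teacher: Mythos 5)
Your proposal is correct and follows essentially the same route as the paper: both iterate the positivity of $\mathcal L_t$ to obtain $0 < \mathcal L_t^n g \le e^{n\alpha} g$ and then pass to the limit using the definition of $P(t)$. The only difference is that you explicitly sandwich $c \le g \le C$ and compare $\|\mathcal L_t^n 1\|_\infty$ with $\|\mathcal L_t^n g\|_\infty$, whereas the paper silently replaces $1$ by $g$ in the defining limit and asserts $e^{P(t)} = \lim_n \|\mathcal L_t^n g\|_\infty^{1/n}$ "since $g$ is positive"; your extra two lines are exactly the justification the paper leaves implicit.
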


\begin{proof}
Since the transfer operator $\mathcal L_t$ is positive, applying it multiple times
to the inequality in the hypothesis yields
$$
0 <
\mathcal L_t^n g(x) \leq e^\alpha \mathcal L_t^{n-1} g(x)
\leq e^{2\alpha} \mathcal L_t^{n-2} g(x)
\leq
\cdots
\leq e^{n\alpha}g(x)
$$
for all $x\in [0, 1]$. In particular, since $g$ is positive we have
$$
e^{P(t)} = \lim_{n \to +\infty} \|\mathcal L_t^n g \|_\infty^{1/n} \leq e^\alpha,
$$
using our definition of $P(t)$. The claim follows.
\end{proof}

Using the fact that $P(t)$ is positive for $t < 0$ and negative for $t > 0$,
the lemma yields the following implications for any $\epsilon, \alpha, \beta > 0$
and any positive continuous functions $f, g$ (cf.~Figure \ref{fig:hypotheses}):
\begin{align}
\sup_x \frac{\mathcal L_{\epsilon} g(x)}{g(x)} \leq  e^{-\alpha}
&\implies |P(\epsilon)| \geq \alpha, \label{eq:pe_lower_bound} \\
\sup_x \frac{\mathcal L_{-\epsilon} f(x)}{f(x)} \leq e^\beta
&\implies P(-\epsilon) \leq  \beta. \label{eq:pe_upper_bound}
\end{align}
Combining with the inequalities in \eqref{eq:h_eps_bounds}, we have the following useful bound.

\begin{figure}
\centerline{
      \begin{tikzpicture}[thick,scale=0.6, every node/.style={scale=0.5}]
        \draw[<-] (0,5) --(0,0); 
        \draw[-] (-0.5,0) --(8,0);
        \draw[red](0,3) .. controls (2,4)  and (6,2) .. (8,3); 
        \node at (0,  -0.5) {$0$};
           \draw[blue](0,2) .. controls (2,3)  and (6,1) .. (8,2); 
        \node at (0,  -0.5) {$0$};
                \node at (8,  -0.5) {$1$};
        \node at (8.5,  2.0) {$\mathcal L_{\epsilon}g$};
            \node at (8.5,  3.2) {$e^{-\alpha}g$};
    \end{tikzpicture}
    \hskip 1cm
        \begin{tikzpicture}[thick,scale=0.6, every node/.style={scale=0.5}]
        \draw[<-] (0,5) --(0,0); 
        \draw[-] (-0.5,0) --(8,0);
        \draw[blue](0,3) .. controls (2,1)  and (6,4) .. (8,3); 
        \node at (0,  -0.5) {$0$};
           \draw[red](0,4) .. controls (2,3)  and (6,6) .. (8,4); 
        \node at (0,  -0.5) {$0$};
                \node at (8,  -0.5) {$1$};
        \node at (8.5,  4.0) {$ e^\beta$ f};
            \node at (8.5,  2.7) {$\mathcal L_{-\epsilon}f$};
    \end{tikzpicture}
}    
\caption{Illustrations of the hypotheses of \eqref{eq:pe_lower_bound} (left)
and \eqref{eq:pe_upper_bound} (right).}
\label{fig:hypotheses}
\end{figure}
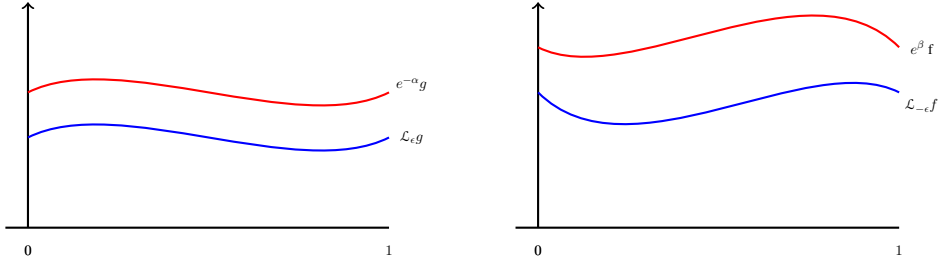

\begin{theorem}[Bounds on the entropy $h(\mu)$] \label{thm:bounds_h}
Let $\epsilon > 0$, and assume there are positive continuous functions
$g, f \colon [0, 1] \to \mathbb R^+$ and $\alpha, \beta > 0$
satisfying the hypotheses of \eqref{eq:pe_lower_bound} and \eqref{eq:pe_upper_bound}, respectively, then
\begin{equation}\label{eq:entropy_bounds}
\frac{\beta}{\epsilon}
\geq h(\mu)  \geq 
\frac{\alpha}{\epsilon}.
\end{equation}

\end{theorem}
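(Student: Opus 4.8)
The plan is to chain together Lemma \ref{lem:pe_bound}, which converts a min-max hypothesis into a bound on $P$ at a single point, with Corollary \ref{cor:entropy_bounds}, which converts bounds on $P(\pm\epsilon)$ into bounds on $h(\mu)$. The implications \eqref{eq:pe_lower_bound} and \eqref{eq:pe_upper_bound} have already been extracted from Lemma \ref{lem:pe_bound}, so essentially all that remains is careful bookkeeping with signs.

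First I would handle the lower bound on $h(\mu)$. The hypothesis of \eqref{eq:pe_lower_bound} is $\sup_x (\mathcal L_\epsilon g(x))/g(x) \leq e^{-\alpha}$, so Lemma \ref{lem:pe_bound} applied with $t = \epsilon$ and with the generic real parameter ``$\alpha$'' in that lemma instantiated as $-\alpha$ gives $P(\epsilon) \leq -\alpha$. By Lemma \ref{lem:pressure}, $P$ is monotone decreasing with $P(0) = 0$, hence $P(\epsilon) < 0$ for $\epsilon > 0$ and therefore $|P(\epsilon)| = -P(\epsilon) \geq \alpha$, which is exactly implication \eqref{eq:pe_lower_bound}. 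Substituting into the right-hand inequality of Corollary \ref{cor:entropy_bounds} yields $h(\mu) \geq |P(\epsilon)|/\epsilon \geq \alpha/\epsilon$.

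Next the upper bound. The hypothesis of \eqref{eq:pe_upper_bound} is $\sup_x (\mathcal L_{-\epsilon} f(x))/f(x) \leq e^{\beta}$, so Lemma \ref{lem:pe_bound} with $t = -\epsilon$ and ``$\alpha$'' equal to $\beta$ gives $P(-\epsilon) \leq \beta$. Since $P$ is decreasing with $P(0) = 0$ we also have $P(-\epsilon) \geq 0$, so the left-hand inequality of Corollary \ref{cor:entropy_bounds} gives $h(\mu) \leq P(-\epsilon)/\epsilon \leq \beta/\epsilon$. Combining the two estimates produces $\beta/\epsilon \geq h(\mu) \geq \alpha/\epsilon$, which is \eqref{eq:entropy_bounds}.

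There is no genuine obstacle here; the proof is just an assembly of already-established facts. The only point needing attention is the sign tracking: one must remember that $P(\epsilon) < 0$ so that $|P(\epsilon)| = -P(\epsilon)$, and that the parameter ``$\alpha$'' in Lemma \ref{lem:pe_bound} is an arbitrary real number (not necessarily positive), so that it may legitimately be taken to be $-\alpha$ in the case $t = \epsilon$.
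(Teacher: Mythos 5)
Your argument is correct and is exactly the chain the paper intends: Lemma \ref{lem:pe_bound} (combined with the sign of $P$ from Lemma \ref{lem:pressure}) gives the implications \eqref{eq:pe_lower_bound} and \eqref{eq:pe_upper_bound}, which then feed directly into the two sides of Corollary \ref{cor:entropy_bounds}. The sign bookkeeping you highlight --- applying Lemma \ref{lem:pe_bound} with $t=\epsilon$ and its parameter taken to be $-\alpha$, then using $P(\epsilon)<0$ to convert $P(\epsilon)\leq-\alpha$ into $|P(\epsilon)|\geq\alpha$ --- is precisely what the paper compresses into the remark that ``$P(t)$ is positive for $t<0$ and negative for $t>0$.''
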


In practice, dividing by a small value $\epsilon$ needs to be more than compensated for by the efficiency of the approach in estimating $P(-\epsilon)$ and $P(\epsilon)$, to allow $\alpha$ and
 $\beta$ to be chosen very close (compared to $\epsilon >0$).
There are two crucial ingredients to make the bounds \eqref{eq:entropy_bounds}
effective, rigorous and computationally efficient.
The first consists in choosing $g, f$ in
\eqref{eq:pe_lower_bound}, \eqref{eq:pe_upper_bound} as close as possible
to the leading eigenfunctions of $\mathcal{L}_{\pm \epsilon}$, the second
in efficiently computing tight rigorous bounds on $(\mathcal L g) / g$ and $(\mathcal L f) / f$.
We now describe effective schemes for both, based on Lagrange-Chebyshev interpolation.

\subsection{Choosing the functions \texorpdfstring{$f$ and $g$}{f and g} via interpolation} \label{sec:choosing_fg}

Given $\epsilon > 0$, our aim is to
choose  suitable functions  $f, g\colon [0,1] \to \mathbb R^+$ for which
we can associate values $\alpha,\beta>0$ satisfying \eqref{eq:pe_lower_bound} and \eqref{eq:pe_upper_bound}
and then apply  the bounds in \eqref{eq:entropy_bounds}.
Although there is plenty of scope in how they
are actually arrived at, the most promising way is to choose $f,g$ as close as possible
to the leading eigenfunctions of $\mathcal{L}_{-\epsilon}$ and $\mathcal{L}_{\epsilon}$, respectively.
As these eigenfunctions are not known explicitly, we approximate them by 
approximating the respective transfer operators by finite-rank operators.

More precisely, denoting by $L\colon B \to B$ any bounded operator on a Banach space $B$,
we can approximate it by a finite-rank operator $P_m L P_m$ where
$P_m\colon B \to B$ is a bounded projection operator of finite rank given by
\[P_m f = \sum_{l=0}^{m-1} e_l^*(f) e_l,\]
with $e_l \in B$ and $e_l^* \in B^*$ for $l=0, \ldots, m-1$. The operator $P_m L P_m$ is represented by an $m \times m$ matrix $M$,
whose $(k,l)$-entry is given by
$$
M_{kl} = e_k^*(L e_l). 
$$

\begin{definition}\label{defn:lagcheb}
For $B = C([-1,1])$, the Lagrange-Chebyshev interpolation scheme consists in using $P_m$ as defined above with
\begin{enumerate}
\item
$e_l$ ($0 \leq l \leq m-1$) the Chebyshev polynomial of the first kind and degree $l$; and
\item
$e_k^*$ ($0 \leq k \leq m-1$) the linear functional given by
$$
e_k^*(f) = \frac{2-\delta_{0,k}}{m} \sum_{j=0}^{m-1} f(x_j) e_k(x_j ),
$$
\end{enumerate}
with $x_0, \ldots, x_{m-1}$ the Chebyshev nodes of order $m$, $x_j = \cos((2j+1)\pi/(2m))$.

For an arbitrary interval $I=[c-r, c+r] = \gamma([-1,1])$, with $\gamma(x) = \gamma_{r,c}(x) = rx+c$ for some $c\in \mathbb{R}$, $r>0$,
we denote  $P_{m, \gamma}$ the corresponding Lagrange-Chebyshev interpolation operator
on $B=C(I)$, defined as above with
$e_l$ and $x_j$ replaced with $\hat{e}_l = e_l \circ \gamma^{-1}$ and $\hat{x}_j =\gamma(x_j)$, respectively.
\end{definition}

We apply the above scheme to our transfer operator $L = \mathcal L_\epsilon\colon C([0,1]) \to C([0,1])$,
using the change of coordinates $\gamma = \gamma_{r,c} \colon [-1, 1] \to [0, 1]$ with $r = c = 1/2$.
We obtain an $m\times m$ matrix $M$, whose right eigenvector $v = (v_0, \ldots, v_{m-1})^T$ corresponding to the largest
eigenvalue of $M$ yields a candidate function $g := g^{(m)} = \sum_{l=0}^{m-1} v_l \hat{e}_l$ to use in \eqref{eq:pe_lower_bound}.
A priori it may not be obvious why $g$ is positive and a good candidate function, however we observe that
$\mathcal{L}_\epsilon$ is not an arbitrary bounded operator. The contractions $T_i$ involved in the definition of $\mathcal L_\epsilon$
are analytic on $[0,1]$, hence extend holomorphically to a complex neighbourhood, in particular to a suitable ellipse $E_{\gamma,R} = \gamma(E_R) \supset [0,1]$, where $E_R \supset [-1, 1]$
is a standard Bernstein ellipse\footnote{A Bernstein ellipse is an ellipse $E_R \subset \mathbb{C}$
with foci $-1$ and $1$ and lengths of major and minor semi-axes given by
$a=\cosh(\log R)$ and $b = \sinh(\log R)$, respectively.} for some $R>0$.
Now, Theorem 3.3 and Corollary 3 of \cite{BS} guarantee that the
(generalized) eigenfunctions of $P_{m, \gamma} \mathcal L_\epsilon P_{m, \gamma}$
converge (in supremum norm) exponentially fast in $m$ to those of $\mathcal{L_\epsilon}$. In particular, for large enough
$m$, the function $g=g^{(m)}$ is positive on $[0,1]$ and approximates 
the eigenfunction for the maximal positive eigenvalue $e^{P(\epsilon)}$ of $\mathcal{L}_\epsilon$.

\subsection{Rigorous bounds on the supremum/infimum} \label{sec:rigorous_ineq}

The main step in establishing the left-hand sides of
\eqref{eq:pe_lower_bound} and \eqref{eq:pe_upper_bound}
comes down to rigorously estimating $\sup_{x\in I} h(x)$
for $h(x) = \mathcal{(L_\epsilon} f)(x) / f(x)$ and $I=[0,1]=[c-r, c+r]$ with $c = r = 1/2$.
By the mean value theorem we have
\begin{equation}\label{eq:derivative_estimate}
\sup_{x\in I} h(x) \leq h(c) + \sup_{x\in I} | h'(x) | \cdot r,
\end{equation}
and using the quotient rule we can write $h' = \psi / f^2$ with
\[
\psi = (\mathcal{L}_\epsilon f)' \cdot f - f' \cdot (\mathcal{L}_\epsilon f),
\]
which we use to rewrite \eqref{eq:derivative_estimate} as
\begin{equation}\label{eq:derivative_estimate_2}
\sup_{x\in I} h(x) \leq h(c) +
\frac{r}{\inf_{x \in I} f(x)^2} \cdot \sup_{x\in I} | \psi(x) |.
\end{equation}
To obtain a tight bound on $\sup_{x\in I} | \psi(x) |$,
we can approximate $\psi$ using the
Langrange-Chebyshev interpolation projector $P_{n, \gamma}$, $n \in \mathbb N$,
introduced in the previous section, and use the fact that
\begin{equation}\label{eq:two_parts}
\sup_{x\in I} |\psi(x)| \leq
\sup_{x\in I} |(\psi-P_{n, \gamma}\psi)(x)| +
\sup_{x\in I} |(P_{n, \gamma}\psi)(x)|.
\end{equation}
We note that the function $\psi$
is holomorphic on $E_{\gamma, R} = \gamma(E_R)$ for a suitable
$R > 1$, whenever $f$ is a polynomial (or more generally, any entire function).
Based on results in \cite{BS}, the following lemma shows that
the error incurred by this approximation decays exponentially in $n$, and provides
all relevant constants explicitly, allowing for a rigorous implementation.

\begin{lemma} \label{lem:approx_error}
Let $R > 1$, and let $\psi$ be a holomorphic function on a scaled Bernstein ellipse $E_{\gamma, R}$. Then for any $\rho \in (1, R)$ and $n \in \mathbb{N}$, the approximation error under Lagrange-Chebyshev interpolation obeys
$$
\sup_{x\in I} |(\psi - P_{n, \gamma}\psi)(x)| \leq
c_{\rho, R} \cdot \frac{\cosh(n \log \rho)}{\sinh(n \log R)}
\cdot \sup_{x\in I}| (\psi \circ \gamma \circ \sigma)(Re^{2\pi i x})|,
$$
where $c_{\rho, R} = \frac{\sinh(\log R)}{\cosh(\log R) - \cosh(\log \rho)}$ and
$\sigma(z) = (z+z^{-1})/2$.
\end{lemma}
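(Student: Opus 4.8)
The plan is to reduce to the reference interval $[-1,1]$, expand $\psi$ in Chebyshev polynomials, and combine an explicit Cauchy estimate for the Chebyshev coefficients with the exact aliasing identity for interpolation at the zeros of $T_n$; the factor $c_{\rho,R}$ will then fall out of a geometric series after a one-line hyperbolic-identity computation.

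\emph{Reduction to $[-1,1]$.} By construction $P_{n,\gamma}$ is the conjugate, via the affine map $\gamma$, of the standard Lagrange--Chebyshev projector $P_n$ on $C([-1,1])$, and $\gamma$ maps $[-1,1]$ onto $I$, so it suffices to treat the case $\gamma=\mathrm{id}$, with $\psi$ holomorphic on the standard Bernstein ellipse $E_R$. Writing $\sigma(z)=(z+z^{-1})/2$, the circle $\{Re^{2\pi ix}:x\in[0,1]\}$ is mapped bijectively onto $\partial E_R$ by $\sigma$, so the right-hand side of the lemma is exactly $M_R:=\sup_{\partial E_R}|\psi|$.

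\emph{The two inputs.} Expanding $\psi=\sum_{k\ge 0}a_kT_k$ and using $T_k(\sigma(w))=\tfrac12(w^k+w^{-k})$, for $k\ge 1$ the coefficient $a_k$ is twice the $w^k$-Laurent coefficient of $w\mapsto\psi(\sigma(w))$ on $\{1/R<|w|<R\}$, so Cauchy's estimate on $|w|=R$ yields $|a_k|\le 2M_R R^{-k}$. For the interpolation, the order-$n$ Chebyshev nodes $x_j=\cos\theta_j$, $\theta_j=(2j+1)\pi/(2n)$, are the zeros of $T_n$, and $\cos((2kn\pm r)\theta_j)=(-1)^k\cos(r\theta_j)$ gives the aliasing rule $P_nT_m=(-1)^kT_r$ whenever $m=2kn\pm r$. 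Summing this over the Chebyshev series of $\psi$ gives $\psi-P_n\psi=\sum_{m\ge n}a_m(T_m-P_nT_m)$; regrouping the aliasing terms by their target index and using $T_nT_k=\tfrac12(T_{n+k}+T_{|n-k|})$, this can be rewritten as the telescoped closed form
\[
\psi-P_n\psi=\sum_{l\ge 0}(-1)^l\,T_n\Bigl(a_{(2l+1)n}+2\sum_{k\ge 1}a_{(2l+1)n+k}\,T_k\Bigr),
\]
whose overall factor $T_n$ exhibits the vanishing at the nodes.

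\emph{The estimate.} Fix $\rho\in(1,R)$. Since $\psi-P_n\psi$ is holomorphic on $E_R\supset E_\rho\supset[-1,1]$, the maximum modulus principle gives $\sup_{[-1,1]}|\psi-P_n\psi|\le\sup_{\partial E_\rho}|\psi-P_n\psi|$, and on $\partial E_\rho$ one has $|T_j|\le\cosh(j\log\rho)$ for every $j$. Inserting this bound together with $|a_j|\le 2M_R R^{-j}$ into the closed form, the double sum factors as
\[
2M_R\cosh(n\log\rho)\Bigl(\sum_{l\ge 0}R^{-(2l+1)n}\Bigr)\Bigl(1+2\sum_{k\ge 1}R^{-k}\cosh(k\log\rho)\Bigr);
\]
here $\sum_{l\ge 0}R^{-(2l+1)n}=(2\sinh(n\log R))^{-1}$ and the last bracket equals $\rho(R^2-1)/((R-\rho)(R\rho-1))$, which a short computation (via $\sinh\log R=(R^2-1)/(2R)$ and $\cosh\log R-\cosh\log\rho=(R-\rho)(R\rho-1)/(2R\rho)$) identifies with $c_{\rho,R}$ --- giving precisely the asserted inequality. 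I expect the main obstacle to be the aliasing bookkeeping: getting the signs $(-1)^k$ and the folded index $r$ right, and verifying the telescoping that collapses the degree-$\ge n$ part of the remainder to $\sum_{m\ge n}a_mT_m$, so that the two geometric sums recombine into $c_{\rho,R}$ exactly rather than merely up to a constant. A minor technical point: if $\psi$ is holomorphic only on the open ellipse $E_R$, one runs the coefficient estimate on $\partial E_{R'}$ with $1<\rho<R'<R$ and lets $R'\uparrow R$.
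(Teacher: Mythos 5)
Your proof is correct, but it takes a genuinely different route from the paper's. The paper's proof is short: it reduces to $[-1,1]$, enlarges the domain to $E_\rho$ via the maximum modulus principle, and then quotes \cite[Lemma 2.6]{BS} as a black box for the key estimate on $E_\rho$; the only extra work is the observation that $\sigma$ parametrizes $\partial E_R$. You instead rederive that cited estimate from scratch, via (i) the Cauchy bound $|a_k|\le 2M_R R^{-k}$ for the Chebyshev coefficients, (ii) the aliasing rule at the zeros of $T_n$, (iii) a telescoped closed form for $\psi-P_n\psi$ with an overall $T_n$ factor, and (iv) explicit geometric and hyperbolic-identity computations that recombine into $c_{\rho,R}\cosh(n\log\rho)/\sinh(n\log R)$. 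I checked the step you flagged as the likely obstacle --- the closed form
\[
\psi-P_n\psi=\sum_{l\ge 0}(-1)^l\,T_n\Bigl(a_{(2l+1)n}+2\sum_{k\ge 1}a_{(2l+1)n+k}T_k\Bigr)
\]
--- by matching coefficients of $T_m$ on both sides (using $2T_nT_k=T_{n+k}+T_{|n-k|}$ and the aliasing $P_nT_{2kn\pm r}=(-1)^kT_r$ for $0\le r\le n-1$, $P_nT_{(2k+1)n}=0$), and it is indeed correct; the alternating sums over $l$ telescope to leave $\sum_{m\ge n}a_mT_m$ plus exactly the aliased low-degree contributions with sign $(-1)^{k+1}$. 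Your remaining computations (the two geometric sums and the identification of the bracket with $c_{\rho,R}$ via $\sinh\log R=(R^2-1)/(2R)$, $\cosh\log R-\cosh\log\rho=(R-\rho)(R\rho-1)/(2R\rho)$) are also correct. What your approach buys is self-containment and transparency: one sees exactly where the constant $c_{\rho,R}$ comes from and how the $\rho\to 1^+$ limit recovers the familiar $(R+1)/(R-1)$ bound on $[-1,1]$; the cost is the delicate index bookkeeping in the aliasing step, which the paper avoids by citation. The paper's route is shorter and defers the hard part to \cite{BS}.
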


\begin{proof}
Writing $\tilde{\psi} = \psi \circ \gamma$, we have
\[
\sup_{x\in I} |(\psi - P_{n, \gamma}\psi)(x)| =
\sup_{x \in [-1,1]}|(\tilde{\psi} - P_n \tilde{\psi})(x)
\leq \sup_{z \in E_\rho} |(\tilde{\psi} - P_{n, \gamma}\tilde{\psi})(z)|,
\]
for any $\rho \in (1, R)$. Using \cite[Lemma 2.6]{BS}, we obtain the bound
\[\sup_{z \in E_\rho} |(\tilde{\psi} - P_{n, \gamma}\tilde{\psi})(z)|
\leq c_{\rho, R} \cdot \frac{\cosh(n \log \rho)}{\sinh(n \log R)}
\cdot \sup_{z\in E_{R}}
|\tilde{\psi}(z)|, \]
with the constant
$c_{\rho, R} = \sinh(\log R) / (\cosh(\log R) - \cosh(\log \rho))$.
The assertion follows by application of the maximum modulus principle, and observing that $\sigma$ maps a circle of radius $R$ to the boundary of
$E_R$:
\begin{equation*}
\sup_{z\in E_{R}} |\tilde{\psi}(z)| \leq
\sup_{z\in \partial E_{R}} |\tilde{\psi}(z)| =
\sup_{x\in I}| (\tilde{\psi}\circ \sigma)(Re^{2\pi i x})|. \qedhere
\end{equation*}
\end{proof}

\begin{remark} \label{rem:ba}
Expressing a quantity as the extremum of a function over an interval
(as done in the previous lemma)
allows for rigorous computational bounding by using ball arithmetic,
as, for example, implemented in the Arb library \cite{arb}. In practice,
in order to obtain rigorous bounds on a function $f$
over the interval $I = [0, 1]$, say, we can
evaluate $f$ on $k \in \mathbb{N}$ equal-sized subintervals $I_l = [c_l-r, c_l+r]$,
$r = |I|/k, c_l = (l-1/2)\cdot r$, $l = 1, \ldots, k$.
The image intervals $J_l = [c'_l-r'_l, c'_l+r'_l]$ output by the
ball arithmetic calculation are then guaranteed to satisfy
$f(I_l) \subseteq J_l$, yielding effective lower and upper bounds on $f$.
\end{remark}

Combining \eqref{eq:derivative_estimate_2}, \eqref{eq:two_parts}
and Lemma \ref{lem:approx_error} yields the inequality
\begin{equation*}
\sup_{x \in I} \frac{\mathcal{L}_\epsilon f(x)}{f(x)}
\leq \frac{\mathcal{L}_\epsilon f(c)}{f(c)} +
\frac{r}{\inf_{x \in I} f(x)^2} \left( \delta_{n,\rho,R}(\psi) + \sup_{x \in I} |(P_{n, \gamma}\psi)(x)| \right),
\end{equation*}
where $\delta_{n,\rho,R}(\psi)$ denotes the upper bound from Lemma \ref{lem:approx_error}
which decays exponentially with $n$. We note that, as in Remark \ref{rem:ba},
all suprema and infima on the right-hand
side of this inequality can be rigorously bounded via ball arithmetic.

\begin{remark} \label{rem:sup_inf}
We note that with the same method as above, a rigorous \emph{lower} bound on
the \emph{infimum} of $(\mathcal{L}_\epsilon f) / f$ can be obtained, yielding
\begin{equation*}
\inf_{x \in I} \frac{\mathcal{L}_\epsilon f(x)}{f(x)}
\geq \frac{\mathcal{L}_\epsilon f(c)}{f(c)} -
\frac{r}{\inf_{x \in I} f(x)^2} \left( \delta_{n,\rho,R}(\psi) + \sup_{x \in I} |(P_{n, \gamma}\psi)(x)| \right).
\end{equation*}
\end{remark}

\subsection{Algorithmic implementation of rigorous bounds} \label{sec:algorithm_impl}

With the results of the previous sections, we are now ready to describe our
algorithm for computing rigorous and tight estimates on $(\mathcal{L}f) / f$
for a given transfer operator $\mathcal{L} = \mathcal{L}_\epsilon$, which
will form the key ingredient in the proofs of Theorems \ref{thm:entropy}-\ref{thm:hdim};
see Algorithm \ref{alg:compute_bounds}.

\begin{algorithm}
\caption{Rigorous lower/upper bounds on $(\mathcal{L}f) / f$}
\label{alg:compute_bounds}
\begin{algorithmic}
\Function{compute\_bounds}{$\mathcal{L}\colon g \mapsto \mathcal{L}g,
d\mathcal{L}\colon (g, g') \mapsto (\mathcal{L}g)', m, n, c, r$}
\State $\{e_l\}, \{e'_l\} \gets $ \Call{cheb\_basis}{$m, c, r$}
\Comment{basis functions and their derivatives}
\State $M \gets $ \Call{compute\_matrix}{$\mathcal{L}, \{e_l\}$}
\State $v \gets$ \Call{power\_method}{$M$}
\Comment{e.vec. corresponding to leading e.val.}
\State $f \gets \sum_l v_l\cdot e_l ~;~ f' \gets \sum_l v_l\cdot e'_l$
\State $f_{min} \gets$ \Call{inf\_ball}{$f$}
\State \Call{assert}{$f_{min} > 0$} \Comment{abort if eigenfunction non-positive}
\State $\mathcal{L}f \gets \mathcal{L}(f)~;~ (\mathcal{L}f)' \gets d\mathcal{L}(f, f')$
\State $\psi \gets (\mathcal{L}f)' \cdot f - \mathcal{L}f \cdot f'$
\State $P_{n,\gamma}(\psi), error(P_{n,\gamma}, \psi) \gets $ \Call{pn\_approximate}{$\psi, n$}
\State $S \gets $\Call{sup\_ball}{$|P_{n,\gamma}(\psi)|$} $+ error(P_{n,\gamma}, \psi) $
\State \Return $\mathcal{L}f(c) / f(c) \mp S \cdot r / f_{min}^2$
\EndFunction
\end{algorithmic}
\end{algorithm}

The algorithm requires access to mappings which,
for a given differentiable function $g$ and its derivative $g'$,
produce $\mathcal{L}g$ and $(\mathcal{L}g)'$.
Following Section \ref{sec:choosing_fg}, it begins by computing
a matrix representation $M$ in the Chebyshev basis
for a finite-rank approximation of $\mathcal{L}$ (\textsc{compute\_matrix})
and obtaining the eigenvector $v$ corresponding to its leading
eigenvalue via the power method (\textsc{power\_method}).

The corresponding eigenfunction $f$ and its derivative are
then used to define $\psi = (\mathcal{L}f)'\cdot f - \mathcal{L} \cdot f'$.
The algorithm proceeds by computing $P_{n,\gamma}(\psi)$ and the
upper bound on the approximation error $error(P_{n,\gamma}, \psi) = \sup |\psi - P_{n,\gamma}(\psi)|$
described in Lemma \ref{lem:approx_error} (\textsc{pn\_approximate}).
Finally, it returns the (rigorous) lower and upper bounds,
following the respective formulas derived in Section \ref{sec:rigorous_ineq}.

We note that all calculations are performed using ball arithmetic to account
for potential accumulation of errors. In particular, as in Remark \ref{rem:sup_inf},
all $\sup$ (\textsc{sup\_ball}) and $\inf$ (\textsc{inf\_ball}) computations
are performed by function evaluation on $k$ equal-sized subintervals of $[c-r, c+r]$.

\section{Proofs of results}\label{sec:proofs}
\subsection{Proof of Theorem \ref{thm:entropy}}

In order to complete the proof of Theorem \ref{thm:entropy}
we combine the rigorous computational estimation
presented in Sections \ref{sec:choosing_fg}-\ref{sec:algorithm_impl}
with Theorem \ref{thm:bounds_h}.

\begin{algorithm}
\caption{Rigorous lower/upper bounds on $P'(0) = - h(\mu)$}
\label{alg:compute_slope}
\begin{algorithmic}
\Function{compute\_pressure\_derivative}{$\epsilon, m, n, c, r$}
\State $\mathcal{L}_\epsilon, d\mathcal{L}_\epsilon \gets $ \Call{operators}{$\epsilon$};
$ \mathcal{L}_{-\epsilon}, d\mathcal{L}_{-\epsilon} \gets $ \Call{operators}{$-\epsilon$}
\State $b_{min}, b_{max} \gets $
\Call{compute\_bounds}{$\mathcal{L}_{-\epsilon}, d\mathcal{L}_{-\epsilon}, m, n, c, r$}
\State $a_{min}, a_{max} \gets $
\Call{compute\_bounds}{$\mathcal{L}_\epsilon, d\mathcal{L}_\epsilon, m, n, c, r$}
\State $\beta \gets \log(b_{max}) ~;~ \alpha \gets -\log(a_{max})$
\State \Return $\alpha/\epsilon, \beta/\epsilon$
\EndFunction
\end{algorithmic}
\end{algorithm}

The computational step is summarized in Algorithm \ref{alg:compute_slope}. Making use
of \textsc{compute\_bounds} (see Algorithm \ref{alg:compute_bounds}) to compute rigorous upper bounds
\begin{equation*}
\frac{\mathcal L_{\epsilon} g(x)}{g(x)} \leq e^{-\alpha} \qquad \text{ and } \qquad
\frac{\mathcal L_{-\epsilon} f(x)}{f(x)} \leq e^{\beta},
\end{equation*}
bounds on $h(\mu)$ are obtained via Theorem \ref{thm:bounds_h} as
$\alpha/\epsilon \leq h(\mu) \leq \beta/\epsilon$.
In order to obtain the given precision, we make the following set of parameter choices in executing
the algorithm:

\begin{enumerate}[(a)]
\item We take the value $\epsilon = 10^{-50}$.
\item We choose $m = 160$ for the rank of the approximation of $\mathcal{L}_\epsilon$.
\item We choose $n = 200$ for the rank of the Lagrange-Chebyshev approximation of the function $\psi$ in Section \ref{sec:rigorous_ineq}.
\item For estimating suprema and infima with ball arithmetic as described in Section \ref{sec:rigorous_ineq}, we use $k = 250$ intervals.
\item We use the value\footnote{We note that $\psi$ is holomorphic on
$E_{\gamma,R}$ for any $1 < R < \exp(\operatorname{arccosh}(3))\approx 5.8$.}
$R = 5.5$ and $\rho = 1.001$ for computing the bound on the approximation error in Lemma \ref{lem:approx_error}.
\end{enumerate}
The resulting values are
\begin{align*}
\alpha/\epsilon = 1.0563130740\,7297055209\,9568877064\,0651679335\,4262184005\,{\color{gray}66}\ldots \\
\beta/\epsilon  = 1.0563130740\,7297055209\,9568877064\,0651679335\,4262184005\,{\color{gray}75}\ldots
\end{align*}
yielding the asserted value for $h(\mu)$.

\begin{remark}
We note that in \eqref{eq:two_parts}, the approximation error incurred by $P_{n,\gamma}$
is negligible compared to the actual size of the approximated supremum.
With our choices of parameters (in particular, approximation rank $n = 200$),
the error is
$\sup_{x \in I} | (\psi - P_{n,\gamma}\psi)(x)  | < 2 \cdot 10^{-142}$,
whereas $\sup_{x \in I} | \psi (x) | \approx \sup_{x \in I} | (P_{n,\gamma} \psi) (x) | \approx  10^{-112}$.
\end{remark}

\subsection{Proof of Theorem \ref{thm:freqs}}

By a simple application of the Birkhoff ergodic theorem, 
the frequency $f_i$ ($i=1,2,3$) of each of the three digits can be expressed in terms of the $T$-invariant measure $\mu$ as
$f_i = \mu(I_i)$, with
$$
I_1 = \left[0, \sqrt{2}-1\right), ~
I_2 = \left[\sqrt{2}-1, \sqrt{3}-1\right), ~
I_3 = \left[ \sqrt{3}-1,1\right).
$$
Thus we need to estimate the respective measure for each of these digits.   To this end we want to replace the family of operators $\mathcal L_t$ by the following.

\begin{definition}
Given $t\in \mathbb R$, we
define operators
$\mathcal N_{i,t} \colon C([0,1]) \to C([0,1])$ for $i=1,2,3$ by
$$
\mathcal N_{i,t} f(x) =
\mathcal L_{t} f(x) + (e^t-1)
|T_i'(x)| f(T_i).
$$

\end{definition}

We can denote by $e^{R_i(t)}$ the spectral radius of the operator $\mathcal N_{i,t}$.

\begin{definition}
For $i=1,2,3$ we define  $R_i\colon \mathbb  R \to \mathbb  R$ as
$$
R_i(t) =  \limsup_{n\to +\infty} \frac{1}{n} \log \|\mathcal N_{i,t} ^n1\|_\infty,
$$
where $1(x) = 1$ is the constant function and $\| f \|_\infty = \sup_{x\in [0, 1]} |f(x)|$.
\end{definition}

The following results are analogues of Lemma \ref{lem:pressure} and Corollary \ref{cor:entropy_bounds}.

\begin{lemma} For $i=1,2,3$ the function $R_i\colon \mathbb R \to \mathbb R$ has the following properties:
\begin{enumerate}
\item $R_i(0) = 0$.
\item 
$R_i$ is $C^\infty$ (even real analytic), monotone decreasing and convex.
\item $\frac{d}{dt}R_i(t)|_{t=0} = - f_i$.
\end{enumerate}
\end{lemma}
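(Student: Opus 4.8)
The plan is to mimic the proof of Lemma \ref{lem:pressure} almost verbatim, replacing $\mathcal L_t$ by $\mathcal N_{i,t}$ and $P$ by $R_i$, and then connecting the derivative $R_i'(0)$ to the digit frequency $f_i=\mu(I_i)$ via the Rokhlin-type identity already used in Lemma \ref{lem:pressure}. The key observation that makes $\mathcal N_{i,t}$ useful is that it is built precisely so that its leading eigenvalue ``counts'' visits to the interval $I_i$: writing $\chi_i$ for the indicator that a point lies in the $i$-th branch domain, one has $\mathcal N_{i,t}f(x)=\sum_{j=1}^3 |T_j'(x)|\,e^{t\chi_i(T_jx)}f(T_jx)$, i.e.\ $\mathcal N_{i,t}$ is the transfer operator twisted by the potential $t\chi_i$. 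From this point of view all three statements are standard thermodynamic-formalism facts.

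First I would verify part (1): $\mathcal N_{i,0}=\mathcal L_0$ (since the extra term carries the factor $e^0-1=0$), so $R_i(0)=P(0)=0$ by part (1) of Lemma \ref{lem:pressure}. For part (2), I would note that each $\mathcal N_{i,t}$ is a positive transfer operator with analytic weights, so by the Ruelle operator theorem (as cited via \cite{pp}) acting on $C^1([0,1])$ it has a simple maximal isolated eigenvalue $e^{R_i(t)}$; analytic perturbation theory \cite{kato, ruelle} in $t$ then gives real-analyticity of $R_i$, and in particular the $\limsup$ in the definition is a genuine limit. Convexity follows exactly as before from convexity of $t\mapsto\|\mathcal N_{i,t}^n 1\|_\infty$ (a finite sum of exponentials $e^{tS_n\chi_i}$ weighted by positive quantities, hence convex), together with the limit. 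Monotone decreasing follows once part (3) is established, since $f_i\ge 0$ (indeed $f_i>0$) forces $R_i'(0)\le 0$, and combined with convexity this gives monotonicity on all of $\mathbb R$ — or alternatively directly from $\partial_t\|\mathcal N_{i,t}^n1\|_\infty\le 0$ would need a sign argument, so I would route it through part (3) and convexity.

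For part (3), the main computation, I would differentiate the eigenvalue equation $\mathcal N_{i,t}h_{i,t}=e^{R_i(t)}h_{i,t}$ at $t=0$. Since $\partial_t\mathcal N_{i,t}|_{t=0}f(x)=|T_i'(x)|f(T_ix)$ and $\mathcal N_{i,0}=\mathcal L_0$, $h_{i,0}=h_0$, $R_i(0)=0$, this yields
\begin{equation*}
\mathcal L_0 h_{i,0}' + |T_i'|\,h_0(T_i\,\cdot\,) = h_{i,0}' + R_i'(0)\,h_0,
\end{equation*}
where $h_{i,0}'=\partial_t h_{i,t}|_{t=0}$. Pairing with the left eigenmeasure $\nu_0=\mathcal L_0^*\nu_0$ kills the $\mathcal L_0 h_{i,0}'$ and $h_{i,0}'$ terms, leaving $\nu_0\big(|T_i'|\,h_0(T_i\,\cdot\,)\big)=R_i'(0)$. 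Now $x\mapsto |T_i'(x)|\,h_0(T_ix)=\mathcal L_0(\mathbf 1_{I_i}h_0)(x)/h_0(x)\cdot h_0(x)$ — more cleanly, $\nu_0(\mathcal L_0(\mathbf 1_{I_i}h_0))=\nu_0(\mathbf 1_{I_i}h_0)=\mu(I_i)$ since $d\mu=h_0\,dx$ and $\nu_0$ is the dual fixed point with $\mu=h_0\nu_0$. Hence $R_i'(0)=\mu(I_i)=f_i$ up to the sign: tracking signs as in \eqref{eq:pressure_der}, the term enters with a plus sign on the perturbation side but the eigenvalue derivative appears as in Lemma \ref{lem:pressure} with opposite orientation, giving $R_i'(0)=-f_i$; I would double-check this sign against the $\mathcal L_t$ case, where the weight is $|T_i'|^{1+t}$ so $\partial_t$ brings down $\log|T_i'|<0$, explaining the minus there, whereas here $\partial_t$ brings down $+\mathbf 1_{I_i}$.

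The main obstacle is purely bookkeeping: getting the sign in part (3) right and justifying that the formal differentiation of the eigendata is legitimate (which is exactly the content of analytic perturbation theory already invoked for Lemma \ref{lem:pressure}, so no new difficulty arises), together with identifying $\nu_0(\mathbf 1_{I_i}h_0)$ with $\mu(I_i)=f_i$ using $d\mu = h_0\,d\nu_0$ and the Birkhoff-theorem interpretation $f_i=\mu(I_i)$ stated at the start of this subsection. Everything else is a direct transcription of the proof of Lemma \ref{lem:pressure}.
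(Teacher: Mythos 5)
Your plan — transcribe the proof of Lemma~\ref{lem:pressure} almost verbatim — is exactly what the paper intends; it states this lemma as a direct analogue and gives no separate proof. However, there is a genuine gap in part~(3), and it propagates into part~(2).

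\textbf{The sign in part (3).} You rewrite $\mathcal N_{i,t}$ as $\mathcal L_0$ twisted by the potential $+t\chi_i$, i.e.\ $\mathcal N_{i,t}f(x)=\sum_{j}|T_j'(x)|e^{t\chi_i(T_jx)}f(T_jx)$. Your own differentiation then gives, after pairing with $\nu_0$,
\[
R_i'(0)=\nu_0\bigl(\mathcal L_0(\chi_i h_0)\bigr)=\nu_0(\chi_i h_0)=\mu(I_i)=+f_i,
\]
a \emph{positive} quantity. There is no ``opposite orientation'' that flips this to $-f_i$; your closing paragraph observes the discrepancy but never resolves it — it simply asserts the target sign. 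For $R_i'(0)=-f_i$ to come out, the twist must carry the potential $-t\chi_i$, i.e.\ one needs $\mathcal N_{i,t}f=\mathcal L_0f+(e^{-t}-1)\,|T_i'|\,f\circ T_i$, so that $\partial_t\mathcal N_{i,t}|_{t=0}=-\mathcal L_0(\chi_i\,\cdot)$. Incidentally, note that you also silently replaced the paper's $\mathcal L_t$ with $\mathcal L_0$: taken literally, the displayed definition $\mathcal N_{i,t}=\mathcal L_t+(e^t-1)|T_i'|\,f\circ T_i$ yields $\partial_t\mathcal N_{i,t}|_{t=0}=-\mathcal L_0(\log|T'|\,\cdot)+\mathcal L_0(\chi_i\,\cdot)$ and hence $R_i'(0)=-h(\mu)+f_i$, yet a third answer. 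The discrepancy should be flagged and resolved, not smoothed over with ``up to the sign''.

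\textbf{Monotone decreasing.} Your route — establish $R_i'(0)\le 0$ and invoke convexity — does not give monotonicity on all of $\mathbb R$: a convex function with negative slope at a single point (e.g.\ $t\mapsto t^2-t$) can still increase to the right. In Lemma~\ref{lem:pressure} the phrase ``follows from the negative derivative in the next part'' is shorthand for the fact that the same perturbation computation gives $P'(t)=-\mu_t(\log|T'|)<0$ for \emph{every} $t$ (since $\log|T'|>0$); here one needs the analogous statement $R_i'(t)=-\mu_{i,t}(I_i)<0$ for all $t$, or, more elementarily, that $t\mapsto\|\mathcal N_{i,t}^n1\|_\infty$ is non-increasing for each $n$ — which again forces the $e^{-t}$ convention, since with $e^{+t}$ the operator family is pointwise \emph{increasing} in $t$ and $R_i$ would be monotone increasing, contradicting Corollary~\ref{cor:r_bounds}.

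The rest of the transcription — part~(1) from $\mathcal N_{i,0}=\mathcal L_0$, analyticity via the Ruelle operator theorem and perturbation theory, log-convexity of $t\mapsto\mathcal N_{i,t}^n1(x)$, and the identification $\nu_0(\chi_i h_0)=\mu(I_i)$ — is correct and matches the intended argument.
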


\begin{corollary}\label{cor:r_bounds}
For $\epsilon > 0$ and $i=1,2,3$ we can bound
$$
\frac{R_i(-\epsilon)}{\epsilon} \geq \mu(I_i)\geq \frac{|R_i(\epsilon)|}{\epsilon}.
$$

\end{corollary}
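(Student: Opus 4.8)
The plan is to mimic, line for line, the reasoning behind Corollary~\ref{cor:entropy_bounds}, with the pressure $P$ replaced by $R_i$ and the entropy $h(\mu)$ replaced by the digit frequency $f_i=\mu(I_i)$. The only inputs needed are the three properties of $R_i$ recorded in the lemma immediately preceding this corollary: that $R_i(0)=0$, that $R_i$ is $C^1$, convex and monotone decreasing, and that $R_i'(0)=-f_i=-\mu(I_i)$. These are precisely the analogues of parts (1)--(3) of Lemma~\ref{lem:pressure}.

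Granting those, the argument is one line of convexity. Since $R_i$ is convex and differentiable, its graph lies above each of its tangent lines, in particular above the tangent at $t=0$:
\begin{equation*}
R_i(t)\ \geq\ R_i(0)+R_i'(0)\,t\ =\ -\mu(I_i)\,t\qquad\text{for all }t\in\mathbb{R}.
\end{equation*}
Evaluating at $t=-\epsilon$ (with $\epsilon>0$) gives $R_i(-\epsilon)\geq\mu(I_i)\,\epsilon$, i.e.\ $R_i(-\epsilon)/\epsilon\geq\mu(I_i)$. Evaluating at $t=\epsilon$ gives $R_i(\epsilon)\geq-\mu(I_i)\,\epsilon$; since $R_i$ is monotone decreasing with $R_i(0)=0$ we have $R_i(\epsilon)\leq0$, so $|R_i(\epsilon)|=-R_i(\epsilon)\leq\mu(I_i)\,\epsilon$, that is $\mu(I_i)\geq|R_i(\epsilon)|/\epsilon$. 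Chaining the two inequalities yields the asserted sandwich $R_i(-\epsilon)/\epsilon\geq\mu(I_i)\geq|R_i(\epsilon)|/\epsilon$, which is exactly the picture of Figure~\ref{fig:pressure} with slope $-f_i$ in place of $-h(\mu)$.

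I do not expect any obstacle at the level of the corollary itself — it is immediate from convexity. The substance sits one step earlier, in the unlabelled lemma collecting the properties of $R_i$, whose proof would run parallel to that of Lemma~\ref{lem:pressure}: one reads off $\mathcal N_{i,t}$ as the transfer operator for the weight that tilts the $i$-th inverse branch by $e^t$ (so $\mathcal N_{i,0}=\mathcal L_0$ and $R_i(0)=P(0)=0$), invokes the Ruelle operator theorem to obtain a simple leading eigenvalue $e^{R_i(t)}$ with real-analytic dependence on $t$, and then differentiates the eigenvalue equation $\mathcal N_{i,t}h_t=e^{R_i(t)}h_t$ at $t=0$ and integrates against the conformal measure $\nu_0=\mathcal L_0^*\nu_0$ exactly as in \eqref{eq:pressure_der}; the only new feature is that the term $(e^t-1)|T_i'|\,f(T_i\cdot)$ contributes $\nu_0\big(\mathcal L_0(\chi_{I_i}h_0)\big)=\nu_0(\chi_{I_i}h_0)=\mu(I_i)$, which pins down $R_i'(0)=-f_i$, with convexity following (as for $P$) from convexity of $t\mapsto\|\mathcal N_{i,t}^n1\|_\infty$ for each $n$. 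With that lemma in hand the corollary above is immediate, and — as with Theorem~\ref{thm:entropy} — the numerical values in Theorem~\ref{thm:freqs} then follow by running the min--max scheme of \S\ref{sec:algo_section} on $\mathcal N_{i,\pm\epsilon}$ to produce rigorous upper bounds on $(\mathcal N_{i,\pm\epsilon}f)/f$, hence on $R_i(\pm\epsilon)$.
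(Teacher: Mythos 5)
Your proof is correct and is precisely the convexity--tangent-line argument the paper uses (implicitly) for Corollary~\ref{cor:entropy_bounds} and then transfers verbatim to $R_i$: the graph of the convex function $R_i$ lies above its tangent at $0$, so $R_i(\pm\epsilon)\geq -\mu(I_i)(\pm\epsilon)$, and monotonicity handles the absolute value. One small caution on your digression about the preceding lemma: if, as you say, $\mathcal N_{i,t}$ tilts the $i$-th branch by $e^t$ with $\mathcal N_{i,0}=\mathcal L_0$, then the contribution $\nu_0(\chi_{I_i}h_0)=\mu(I_i)$ would give $R_i'(0)=+f_i$, not $-f_i$; to match the stated sign and monotone decrease the tilt should be $e^{-t}$ (equivalently the extra term should carry $e^{-t}-1$), a sign the paper's own displayed definition of $\mathcal N_{i,t}$ also appears to get wrong.
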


In order to convert these bounds into practical estimates we need the analogue of
Lemma \ref{lem:pe_bound}, now for the operators $\mathcal N_{i, t}$ ($i=1,2,3$).

\begin{lemma}
Let $t, \alpha \in \mathbb{R}$, $i=1, 2, 3$. If there exists a positive continuous
function $g\colon[0,1]  \to \mathbb R^+$ such that
$\sup_x \frac{\mathcal N_{i, t} g(x)}{g(x)} \leq  e^\alpha$, then $R_i(t) \leq \alpha$.
\end{lemma}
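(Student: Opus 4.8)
The plan is to mimic the proof of Lemma \ref{lem:pe_bound} verbatim, replacing $\mathcal L_t$ by $\mathcal N_{i,t}$ throughout. The only structural fact about $\mathcal L_t$ used in that proof is its positivity, so first I would observe that $\mathcal N_{i,t}$ is itself a positive operator on $C([0,1])$: indeed $\mathcal N_{i,t} f = \mathcal L_t f + (e^t-1)|T_i'|\, f\circ T_i$, and for $t \geq 0$ we have $e^t - 1 \geq 0$, so $\mathcal N_{i,t}$ is a sum of the positive operator $\mathcal L_t$ and a nonnegative multiple of the positive operator $f \mapsto |T_i'|\, f\circ T_i$, hence positive; one should check the case $t<0$ is not actually needed here since the hypothesis only needs to be applied for the relevant sign, but in fact positivity can be recovered by regrouping terms ($e^t |T_i'| f\circ T_i \geq 0$ together with the $j\neq i$ terms of $\mathcal L_t$), so $\mathcal N_{i,t}$ is positive for all $t \in \mathbb R$.

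Given positivity, the argument runs as follows. Assume $g > 0$ continuous with $\sup_x (\mathcal N_{i,t} g(x))/g(x) \leq e^\alpha$, i.e.\ $\mathcal N_{i,t} g(x) \leq e^\alpha g(x)$ pointwise. Applying the positive operator $\mathcal N_{i,t}$ repeatedly and using monotonicity, I would obtain by induction
$$
0 < \mathcal N_{i,t}^n g(x) \leq e^\alpha \mathcal N_{i,t}^{n-1} g(x) \leq \cdots \leq e^{n\alpha} g(x)
$$
for all $x \in [0,1]$ and $n \geq 1$. Since $g$ is bounded away from $0$ and above on the compact interval $[0,1]$, this gives $\|\mathcal N_{i,t}^n g\|_\infty \leq e^{n\alpha}\|g\|_\infty$, whence
$$
e^{R_i(t)} = \limsup_{n \to +\infty} \|\mathcal N_{i,t}^n g\|_\infty^{1/n} \leq e^\alpha.
$$
Here I am using that $R_i(t)$, defined via $\|\mathcal N_{i,t}^n 1\|_\infty$, can equivalently be computed with $g$ in place of $1$: since $c_1 \leq g \leq c_2$ for positive constants, positivity of $\mathcal N_{i,t}$ gives $c_1 \mathcal N_{i,t}^n 1 \leq \mathcal N_{i,t}^n g \leq c_2 \mathcal N_{i,t}^n 1$, so the two $\limsup$s of $n$-th roots coincide. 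Taking logarithms yields $R_i(t) \leq \alpha$, as claimed.

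There is no real obstacle here — the lemma is a routine transcription of Lemma \ref{lem:pe_bound}. The only point requiring a line of care is the justification that the spectral-radius-type limit $R_i(t)$ is insensitive to replacing the constant function $1$ by the positive function $g$, which follows from sandwiching via positivity as indicated; and, if one wants to be fully careful, noting that positivity of $\mathcal N_{i,t}$ holds for every $t$, not merely $t \geq 0$, by the regrouping observation above. Everything else is identical to the proof already given for $\mathcal L_t$.
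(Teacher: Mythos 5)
Your proof is correct and takes essentially the same approach the paper intends, which only states that this lemma is proved analogously to Lemma \ref{lem:pe_bound}; you carry that out faithfully, and the sandwich argument $c_1\leq g\leq c_2$ justifying that $R_i(t)$ may be computed from $\|\mathcal N_{i,t}^n g\|_\infty$ rather than $\|\mathcal N_{i,t}^n 1\|_\infty$ is a useful clarification the paper leaves implicit.

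One small computational slip is worth fixing. With the paper's definition $\mathcal N_{i,t}f = \mathcal L_t f + (e^t-1)|T_i'|\,f\circ T_i$, the coefficient of $f\circ T_i$ is $|T_i'|^{1+t} + (e^t-1)|T_i'| = |T_i'|\bigl(|T_i'|^t + e^t - 1\bigr)$, not $e^t|T_i'|$ as your regrouping asserts (that regrouping would require $|T_i'|^{1+t}=|T_i'|$, i.e.\ $t=0$). The positivity conclusion for all $t\in\mathbb R$ nevertheless holds: since $0<|T_i'|<1$, for $t<0$ one has $|T_i'|^t>1$, hence $|T_i'|^t + e^t - 1 > e^t > 0$, while for $t\geq 0$ both summands are nonnegative. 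Also, both signs of $t$ genuinely occur in the application (Corollary \ref{cor:r_bounds} uses $R_i(\pm\epsilon)$), so the case $t<0$ is indeed needed, as you ultimately handle.
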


The remainder of the proof of the theorem is completely analogous to that of Theorem \ref{thm:entropy},
where for the computational step we also use the exact same parameter values.

\subsection{Proof of Theorem \ref{thm:hdim}}

We can adapt the method of the proof of Theorem \ref{thm:entropy} to prove Theorem \ref{thm:hdim}.
To begin we need to replace the operators $\mathcal L_t$ by the following operators.

\begin{definition}
For any $t \in \mathbb R$, let
 $\mathcal M_t\colon C([0,1])\to C([0,1])$ be the bounded linear operator defined by
$$
\mathcal M_t f(x) = |T_0'(x)|^t f(T_0x) + |T_2'(x)|^t f(T_2x) \quad \forall f \in C([0,1]), x \in [0,1].
 $$
 \end{definition}
Denoting the spectral radius of $\mathcal M_t$ by $e^{Q(t)}$,
the following re-formulation follows from the  spectral radius theorem.

\begin{definition}
We can write  $Q \colon \mathbb R  \to \mathbb  R$ as
$$
Q(t) =  \lim_{n\to +\infty} \frac{1}{n} \log \|\mathcal M_t ^n{\mathbf 1}\|_\infty,
$$
where ${\mathbf 1}(x) = 1$ is the constant function.
\end{definition}

The following result is the analogue of Lemma \ref{lem:pressure}.

\begin{lemma} The function $Q \colon \mathbb R \to \mathbb R$
 is $C^\infty$ (even real analytic), monotone decreasing and convex.
\end{lemma}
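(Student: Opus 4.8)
The plan is to mimic the proof of Lemma~\ref{lem:pressure} verbatim, since $\mathcal{M}_t$ is structurally of the same type: a weighted transfer operator built from the two analytic contractions $T_0$ and $T_2$ with weights $|T_i'(x)|^t$. First I would observe that the maps $T_0, T_2$ are analytic on $[0,1]$ and uniformly contracting (with $\sup_x |T_i'(x)| < 1$, indeed $\le 1/2$), so that $\mathcal{M}_t$ restricts to a bounded operator on $C^1([0,1])$ and, by the Ruelle Operator Theorem (see \cite{pp, ruelle}), has a simple maximal eigenvalue $e^{Q(t)}$ equal to its spectral radius, with a strictly positive $C^1$ (in fact real-analytic) eigenfunction $h_t$, and the rest of the spectrum contained in a disc of strictly smaller radius. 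This immediately gives that the defining limit exists. Real analyticity of $t\mapsto Q(t)$ then follows from analytic perturbation theory \cite{kato} applied to the isolated simple eigenvalue $e^{Q(t)}$, exactly as in Lemma~\ref{lem:pressure}(2).

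Next, for monotonicity and convexity I would argue via the weights. For each $n\ge 1$, $\|\mathcal{M}_t^n\mathbf{1}\|_\infty$ is a finite sum of terms $\prod_{j=0}^{n-1}|(T_{i_j}\cdots)'|^t = e^{t\cdot S_n}$ over admissible strings, where each exponent $S_n$ is a sum of logarithms of derivatives; since $\log|T_i'| < 0$ on $[0,1]$, each such term is a decreasing function of $t$, hence so is the sup, and therefore $Q(t) = \lim_n \tfrac1n\log\|\mathcal{M}_t^n\mathbf{1}\|_\infty$ is monotone decreasing. Convexity follows because each $t\mapsto e^{t S_n}$ is convex, a finite sum and then a supremum of convex functions is convex, and the pointwise limit of convex functions is convex — this is precisely the argument used for $P(t)$ in Lemma~\ref{lem:pressure}. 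Alternatively, convexity can be read off from the second derivative of $Q$ via the variance formula in thermodynamic formalism, but the elementary argument above suffices and matches the style of the paper.

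I do not expect a genuine obstacle here; the only point requiring a little care is that, unlike $P(t)$, the function $Q(t)$ is not normalized (there is no analogue of $P(0)=0$, and indeed $Q$ does not vanish at $t=0$), so I would be careful \emph{not} to assert $Q(0)=0$ and would only claim the three properties stated: real-analyticity, monotone decrease, and convexity. The one spot where I would double-check the hypotheses is the strict negativity of $\log|T_i'|$ uniformly on $[0,1]$ for $i=0,2$, which is what drives strict monotonicity; from $T_i'(x) = 1/(2(T_i(x)+1))$ and $T_i(x)\ge 0$ we get $|T_i'(x)|\le 1/2 < 1$, so $\log|T_i'|\le -\log 2 < 0$ everywhere, and the argument goes through cleanly. (Here I read the operator $\mathcal{M}_t$ as using the inverse branches $T_0$ and $T_2$ associated to digits $1$ and $3$, consistent with the deleted-digit set $\mathcal{E}$; the argument is insensitive to the precise labelling.)
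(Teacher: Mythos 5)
The paper states this lemma without proof, offering it only as ``the analogue of Lemma~\ref{lem:pressure}'', so the intended argument is exactly the one you give: mirror the proof of Lemma~\ref{lem:pressure} for $P(t)$. Your proposal is correct, and you rightly flag that there is no normalization identity $Q(0)=0$ (indeed $Q(0)=\log 2$), which is the one place where a na\"{\i}ve copy of the earlier lemma would go wrong. Two small points worth recording, both of which actually \emph{tighten} the paper's own argument for $P(t)$ rather than introduce a gap. First, for convexity the intermediate claim should really be \emph{log}-convexity: knowing that $t\mapsto\|\mathcal M_t^n\mathbf 1\|_\infty$ is convex does not by itself give convexity of $\tfrac1n\log\|\mathcal M_t^n\mathbf 1\|_\infty$; what one uses is that each summand $e^{tS_n}$ is log-affine, hence the finite sum and then the supremum over $x$ are log-convex (by H\"older), and the pointwise limit of convex functions $\tfrac1n\log\|\mathcal M_t^n\mathbf 1\|_\infty$ is convex. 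Second, your weight-based argument for monotone decrease (each $\log|T_i'|\le-\log 2<0$, so every term $e^{tS_n}$ is decreasing in $t$) is cleaner and more complete than the paper's appeal to the sign of $P'(0)$ in Lemma~\ref{lem:pressure}, which by itself only controls monotonicity locally; your version gives global monotone decrease directly.
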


The following lemma can be proved analogously to Lemma \ref{lem:pe_bound}.

\begin{lemma}\label{lem:mt_bounds}
Let $t > 0, \alpha \in \mathbb R$, and let $g \colon [0, 1] \to \mathbb R^+$ be a positive continous function.
Then the following implications hold:
\begin{align}
\inf_{x \in [0, 1]} \frac{\mathcal M_t g(x)}{g(x)} \geq e^\alpha
&\implies Q(t) \geq \alpha,  \\
\sup_{x \in [0, 1]} \frac{\mathcal M_t g(x)}{g(x)} \leq e^\alpha
&\implies Q(t) \leq  \alpha.
\end{align}
\end{lemma}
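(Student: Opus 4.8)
The plan is to mimic the proof of Lemma \ref{lem:pe_bound} almost verbatim, exploiting that $\mathcal{M}_t$ is a positive operator (it is a sum of weighted composition operators with positive weights $|T_0'|^t, |T_2'|^t$, which are positive since $T_0', T_2'$ are positive and $t>0$ — though in fact positivity of $t$ is not even needed for the positivity of $\mathcal{M}_t$). First I would observe that positivity of $\mathcal{M}_t$ means it is monotone: if $u \leq v$ pointwise then $\mathcal{M}_t u \leq \mathcal{M}_t v$.

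For the second implication, suppose $\sup_{x\in[0,1]} \mathcal{M}_t g(x)/g(x) \leq e^\alpha$, i.e.\ $\mathcal{M}_t g(x) \leq e^\alpha g(x)$ for all $x$. Applying $\mathcal{M}_t$ repeatedly and using monotonicity (and $e^\alpha \geq 0$, or just pulling the scalar out by linearity), we get
\[
0 \leq \mathcal{M}_t^n g(x) \leq e^\alpha \mathcal{M}_t^{n-1} g(x) \leq \cdots \leq e^{n\alpha} g(x)
\]
for all $x \in [0,1]$ and $n \geq 1$. Taking supremum norms, $\|\mathcal{M}_t^n g\|_\infty \leq e^{n\alpha}\|g\|_\infty$, so $\|\mathcal{M}_t^n g\|_\infty^{1/n} \leq e^\alpha \|g\|_\infty^{1/n} \to e^\alpha$, giving $e^{Q(t)} = \lim_n \|\mathcal{M}_t^n g\|_\infty^{1/n} \leq e^\alpha$, hence $Q(t) \leq \alpha$. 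Here I am using that the limit defining $Q(t)$ can be computed using any strictly positive continuous function $g$ in place of $\mathbf{1}$: since $g$ is bounded above and below by positive constants on the compact interval $[0,1]$, the quantities $\|\mathcal{M}_t^n g\|_\infty$ and $\|\mathcal{M}_t^n \mathbf{1}\|_\infty$ are comparable up to a multiplicative constant, which washes out in the $1/n$ power.

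For the first implication the argument is dual: if $\inf_x \mathcal{M}_t g(x)/g(x) \geq e^\alpha$, then $\mathcal{M}_t g(x) \geq e^\alpha g(x)$ for all $x$, and iterating gives $\mathcal{M}_t^n g(x) \geq e^{n\alpha} g(x) \geq e^{n\alpha} \inf_x g(x)$ for all $x$. Taking supremum norms yields $\|\mathcal{M}_t^n g\|_\infty \geq e^{n\alpha}\inf_x g > 0$, so $\|\mathcal{M}_t^n g\|_\infty^{1/n} \geq e^\alpha (\inf_x g)^{1/n} \to e^\alpha$, and therefore $e^{Q(t)} \geq e^\alpha$, i.e.\ $Q(t) \geq \alpha$; again using the comparability of $g$ with $\mathbf 1$ so that the $\limsup$/$\lim$ in the definition of $Q$ is unaffected.

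I do not anticipate a genuine obstacle here; the only point requiring a modicum of care is the justification that the spectral-radius-type limit $\lim_n \|\mathcal{M}_t^n g\|_\infty^{1/n}$ equals $e^{Q(t)}$ independently of the choice of positive continuous $g$, which follows from the two-sided bound $m \leq g \leq M$ with $m = \min g > 0$, $M = \max g$, giving $m\,\mathbf 1 \leq g \leq M\,\mathbf 1$ and hence $m\|\mathcal{M}_t^n\mathbf 1\|_\infty \leq \|\mathcal{M}_t^n g\|_\infty \leq M\|\mathcal{M}_t^n\mathbf 1\|_\infty$ by positivity of $\mathcal{M}_t$. (Strictly speaking the existence of the limit, rather than merely a $\limsup$, for $g = \mathbf 1$ follows from the Ruelle operator theorem analogue asserted for $\mathcal{M}_t$, but for the direction of the inequalities needed here only the $\limsup$ comparison is required.)
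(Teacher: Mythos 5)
Your proof is correct and follows essentially the same route as the paper, which simply states that Lemma~\ref{lem:mt_bounds} ``can be proved analogously to Lemma~\ref{lem:pe_bound}''; you have spelled out that analogy, including the dual argument for the $\inf$ direction and the (worth making explicit) justification that $\lim_n\|\mathcal M_t^n g\|_\infty^{1/n} = e^{Q(t)}$ for any positive continuous $g$ by comparability with $\mathbf 1$.
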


The connection between $\dim(\mathcal E)$ and the function $Q$ is the following classical result \cite{bowen, ruelle1}.

 \begin{lemma}[Bowen's Theorem]\label{lem:bowen}
 The Hausdorff{} dimension of  $\mathcal E$ corresponds to the solution $t = \dim_H(\mathcal E)$
 to $Q(t) = 0$.
\end{lemma}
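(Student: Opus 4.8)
The plan is to identify $\dim_H(\mathcal E)$ through Bowen's formula for the conformal iterated function system $\{T_1, T_3\}$ (with $T_i(x) = \sqrt{i+x}-1$) whose limit set is exactly $\mathcal E$, and whose associated pressure is $Q$. First I would record the structural facts that make this IFS well behaved: since $T$ is expanding with $\inf_x |T'(x)| \ge 2$, each $T_i$ is a contraction with $\sup_x |T_i'(x)| \le \tfrac12$; the two branch images have disjoint interiors, so the open set condition holds; and since the $T_i$ extend holomorphically to a neighbourhood of $[0,1]$, the compositions $T_\omega = T_{\omega_1}\circ\cdots\circ T_{\omega_n}$ ($\omega \in \{1,3\}^n$) have uniformly bounded distortion, i.e.\ there is $D \ge 1$ with $|T_\omega'(x)|/|T_\omega'(y)| \le D$ for all $n$, all words $\omega$, and all $x,y \in [0,1]$.

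Next I would connect $Q$ to the natural geometric partition sums. Iterating and using the chain rule gives $\mathcal M_t^n \mathbf 1(x) = \sum_{\omega \in \{1,3\}^n} |T_\omega'(x)|^t$, and bounded distortion makes this comparable, uniformly in $x$ and $n$, to $\sum_{\omega \in \{1,3\}^n} (\operatorname{diam} T_\omega([0,1]))^t$, so that $Q(t) = \lim_n \tfrac1n \log \sum_{\omega \in \{1,3\}^n}(\operatorname{diam} T_\omega([0,1]))^t$. Since $\mathcal M_0 \mathbf 1 = 2$ we have $Q(0) = \log 2 > 0$, while $Q(t) \to -\infty$ as $t \to +\infty$ because the branches are genuine contractions; together with the previous lemma ($Q$ continuous, decreasing, convex) this shows $Q$ has a unique zero $t_0$, and it remains to prove $\dim_H(\mathcal E) = t_0$.

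The upper bound is the short half: for $t > t_0$ we have $Q(t) < 0$, so $\sum_{\omega \in \{1,3\}^n}(\operatorname{diam} T_\omega([0,1]))^t \to 0$; as $\{T_\omega([0,1]) : \omega \in \{1,3\}^n\}$ covers $\mathcal E$ with mesh tending to $0$, the $t$-dimensional Hausdorff measure of $\mathcal E$ vanishes, so $\dim_H(\mathcal E) \le t$, and letting $t \downarrow t_0$ gives $\dim_H(\mathcal E) \le t_0$. For the lower bound I would fix $t < t_0$ (so $Q(t) > 0$), take $\nu$ to be the Gibbs/equilibrium state on $\mathcal E$ for the potential $-t\log|T'|$, obtained as the eigenmeasure of the dual of $\mathcal M_t$ via the Ruelle operator theorem, and use the Gibbs property with bounded distortion to get $\nu(T_\omega([0,1])) \asymp e^{-nQ(t)} (\operatorname{diam} T_\omega([0,1]))^t$. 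A standard covering argument — a ball $B(x,r)$ centred on $\mathcal E$ meets boundedly many cylinders of diameter comparable to $r$ — then yields $\nu(B(x,r)) \lesssim r^t$, whence the mass distribution principle gives $\dim_H(\mathcal E) \ge t$; letting $t \uparrow t_0$ finishes the proof. All of this is classical \cite{bowen, ruelle1}.

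The step I expect to be the main obstacle, in a fully self-contained account, is the lower bound: constructing the conformal (Gibbs) measure $\nu$ and verifying the Frostman-type estimate $\nu(B(x,r)) \lesssim r^t$ with the correct constants. The bounded distortion estimate that underpins everything is routine here thanks to analyticity of the branches, and the upper bound is essentially immediate once the partition sums are identified; but packaging the thermodynamic formalism for the lower bound is where the real work lies, which is why we invoke the classical references rather than reprove it.
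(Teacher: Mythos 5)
The paper does not prove this lemma at all: it is stated as a classical fact and the proof is delegated entirely to the cited references \cite{bowen,ruelle1}. So there is no paper proof against which to compare yours. That said, your sketch is a correct outline of the standard argument for Bowen's formula for a two-branch conformal iterated function system satisfying the open set condition with analytic (hence bounded-distortion) branches: $\mathcal M_t^n\mathbf 1(x) = \sum_{\omega}|T_\omega'(x)|^t$ is comparable to $\sum_\omega(\operatorname{diam}T_\omega[0,1])^t$, $Q(0)=\log 2>0$ together with strict monotone decay to $-\infty$ gives a unique zero $t_0$, the cylinder covering gives $\dim_H(\mathcal E)\le t_0$, and the Gibbs eigenmeasure of $\mathcal M_t^*$ plus the mass distribution principle gives $\dim_H(\mathcal E)\ge t_0$. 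You also (rightly, if silently) replaced the paper's $T_0,T_2$ in the definition of $\mathcal M_t$ by $T_1,T_3$; as written in the paper this is an indexing slip, since the inverse branches are labelled $T_1,T_2,T_3$ and $\mathcal E$ uses digits $\{1,3\}$. Since the genuinely nontrivial half (the Frostman estimate for the Gibbs measure) is, like the paper, deferred to the classical references, your account is at the same level of rigour as the paper's citation, just with more of the scaffolding exhibited.
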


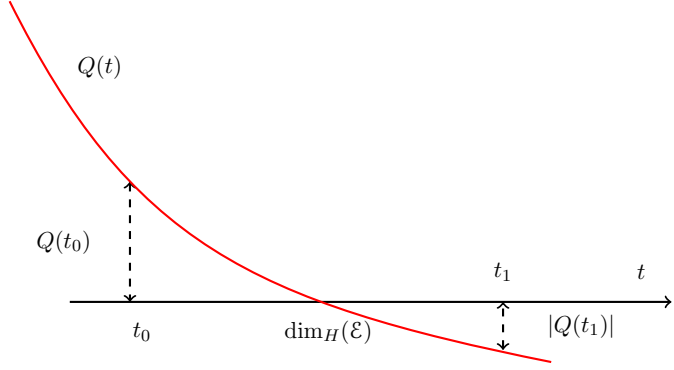
\begin{figure}
\begin{center}
\begin{tikzpicture}[thick,scale=0.8, every node/.style={scale=0.8}]
\draw[->] (0,0) --(10,0);
\node at (1.2,  -0.5) {$t_0$};
\node at (7.2,  0.5) {$t_1$};
\draw[dashed, <->] (1,2.0)--(1,0);
\draw[dashed, <->] (7.2,-0.8)--(7.2,0);
\node at (-0.1,  1.0) {$Q(t_0) $};
\node at (8.5,  -0.4) {$|Q(t_1)| $};
\draw[red](-1,5) .. controls (1,1)  and (3,0) .. (8,-1.0);
\node at (4.3,  -0.5) {$\dim_H(\mathcal E)$};
\node at (0.5,  3.9) {$Q(t)$};
\node at (9.5,  0.5) {$t$};
\end{tikzpicture}
\end{center}
\caption{$t_0 \leq \dim_H(\mathcal E) \leq t_1$}
\label{fig:dim_e}
\end{figure}

Lemmas \ref{lem:mt_bounds} and \ref{lem:bowen} and the monotonicity of $Q(t)$ immediately yield the following corollary (see Figure \ref{fig:dim_e}).

\begin{corollary}\label{cor:dimh}
Let $t_1 > t_0 > 0$ and assume there are positive continuous functions
$f,g \colon [0, 1] \to \mathbb R^+$ and $\alpha, \beta > 0$ such that
$$\inf_x \frac{\mathcal M_{t_0} g(x)}{g(x)} \geq e^\beta \quad \text{ and }
\quad \sup_x \frac{\mathcal M_{t_1} f(x)}{f(x)} \leq e^{-\alpha}.$$
Then $t_0 < \dim_H(\mathcal E) < t_1$.
\end{corollary}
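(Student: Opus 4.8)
The plan is to derive Corollary \ref{cor:dimh} as a direct consequence of Lemma \ref{lem:mt_bounds} together with Lemma \ref{lem:bowen} and the monotonicity and continuity of $Q$. First I would apply the two implications in Lemma \ref{lem:mt_bounds}: the hypothesis $\inf_x (\mathcal M_{t_0} g)/g \geq e^\beta$ gives $Q(t_0) \geq \beta > 0$, and the hypothesis $\sup_x (\mathcal M_{t_1} f)/f \leq e^{-\alpha}$ gives $Q(t_1) \leq -\alpha < 0$. So we have located a sign change: $Q(t_0) > 0 > Q(t_1)$.

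Next I would invoke Lemma \ref{lem:bowen} (Bowen's theorem), which identifies $s := \dim_H(\mathcal E)$ as the unique zero of $Q$. Since $Q$ is continuous (indeed real analytic) and strictly monotone decreasing, the zero is unique, and from $Q(t_0) > 0 = Q(s)$ together with monotonicity we get $t_0 < s$; similarly $Q(t_1) < 0 = Q(s)$ forces $s < t_1$. Combining, $t_0 < \dim_H(\mathcal E) < t_1$, which is exactly the claimed conclusion. Strictly speaking, to conclude $s < t_1$ from $Q(t_1) < 0 = Q(s)$ one only needs that $Q$ is monotone \emph{non-increasing} plus the fact that $Q$ cannot be zero anywhere to the right of $s$ — but since the preceding lemma asserts strict monotonicity (decreasing), this is immediate. (One should note $Q(t_0)$ and $Q(t_1)$ are finite: $\mathcal M_t$ is a bounded operator for every $t$, so $Q(t) < \infty$, and positivity of the branches gives $Q(t) > -\infty$; in any case the inequalities $Q(t_0) \geq \beta$ and $Q(t_1) \leq -\alpha$ already pin them down.)

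There is essentially no obstacle here: the corollary is a packaging of earlier results, and the only thing to be careful about is that the sign conventions in Lemma \ref{lem:mt_bounds} and the strict-versus-weak nature of the monotonicity of $Q$ are used consistently, so that the two strict inequalities $t_0 < \dim_H(\mathcal E)$ and $\dim_H(\mathcal E) < t_1$ both come out strict. In the actual application one would of course verify the hypotheses computationally — choosing $t_0, t_1$ bracketing the dimension, building finite-rank Lagrange-Chebyshev approximations of $\mathcal M_{t_0}$ and $\mathcal M_{t_1}$, extracting leading eigenfunctions as candidates $g$ and $f$, and then rigorously bounding $\inf (\mathcal M_{t_0} g)/g$ from below and $\sup (\mathcal M_{t_1} f)/f$ from above using Algorithm \ref{alg:compute_bounds} (with the lower-bound variant of Remark \ref{rem:sup_inf}) — but that is the content of the proof of Theorem \ref{thm:hdim}, not of the corollary itself.
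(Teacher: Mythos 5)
Your proof is correct and follows the same route as the paper, which simply states that the corollary follows immediately from Lemma \ref{lem:mt_bounds}, Lemma \ref{lem:bowen} (Bowen's theorem), and the monotonicity of $Q$; you have written out precisely the reasoning the paper leaves implicit. The side remark about strict versus weak monotonicity is fine but unnecessary: since $\alpha,\beta>0$, the inequalities $Q(t_0)>Q(s)$ and $Q(t_1)<Q(s)$ are strict, and even for a merely non-increasing $Q$ these already force $t_0<s<t_1$.
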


\begin{remark}
It is worth noting that in the proofs of Theorems \ref{thm:entropy} and \ref{thm:freqs} our goal
was to estimate the \emph{derivative} of the pressure function, which required obtaining two upper bounds
on expressions of the form $(\mathcal L f) / f$. Here, on the other hand, we are aiming to
accurately estimate the \emph{zero} of such function, which is achieved by computing a pair of lower and upper bounds.
\end{remark}

Corollary \ref{cor:dimh} provides us with a criterion that we can easily rigorously
verify with the computational method described in Section \ref{sec:method},
in particular using Algorithm \ref{alg:compute_bounds}.

\begin{algorithm}
\caption{Rigorous lower/upper bounds on zero of $Q$}
\label{alg:compute_zero}
\begin{algorithmic}
\Function{compute\_pressure\_zero}{$t_0, t_1, m, n, c, r, \delta$}
\While{$t_1 - t_0 > \delta$}  \Comment{iterate to desired precision $\delta$}
\State $t \gets (t_0 + t_1)/2$
\State $\mathcal{M}_t, d\mathcal{M}_t \gets $ \Call{operators}{$t$}
\State $\alpha_{min}, \alpha_{max} \gets $
\Call{compute\_bounds}{$\mathcal{M}_t, d\mathcal{M}_t, m, n, c, r$}
\If{$\alpha_{min} > 0$}
\State $t_0 \gets t$
\ElsIf{$\alpha_{max} < 0$}
\State $t_1\gets t$
\Else \Comment{also invoke this case if \textsc{compute\_bounds}
fails ($f_{min} \leq 0$)}
\State $m \gets m+1$
\EndIf
\EndWhile
\State \Return $t_0, t_1$
\EndFunction
\end{algorithmic}
\end{algorithm}

The entire procedure is described in Algorithm \ref{alg:compute_zero}.
It is based on a binary search principle:
beginning with a small initial approximation rank $m$ and an interval $[t_0, t_1]$
guaranteed to contain $\dim_H(\mathcal E)$, we compute rigorous bounds on
$\inf$ and $\sup$ of $\mathcal M_t f / f$ for $t$ the mid-point of $[t_0, t_1]$ (\textsc{compute\_bounds}).
If $\inf (\mathcal M_t f) / f > 1$, then $\dim_H(\mathcal E) > t$ and the search continues
on the sub-interval $[t, t_1]$; if $\sup (\mathcal M_t f) / f < 1$,
then $\dim_H(\mathcal E) \leq t$ and the search continues on $[t_0, t]$.
If neither of the conditions hold, or if the approximate eigenfunction $f$
computed in \textsc{compute\_bounds} fails to be positive (both of which can occur due to
insufficiently accurate finite-rank approximation of the operator),
then we increment the approximation rank $m$ and repeat the procedure on $[t_0, t_1]$.
This iteration proceeds until the required accuracy $\delta > 0$ is reached, that is, until
$t_1 - t_0 < \delta$.

We perform this computation with the initial values
$t_0 = 0.5$, $t_1 = 0.8$ and $m = 10$, and the parameters $n = 100$, $k = 250$, $\rho = 1.001$ and $R=5.5$,
which yields the asserted value and accuracy for
$\dim_H(\mathcal E)$, proving Theorem \ref{thm:hdim}.
We note that in practice the algorithm reaches this accuracy after incrementing
the approximation rank for $\mathcal M_t$ to $m = 75$.

\section{Comparison with other approaches} \label{sec:comparison}

In this section we compare the estimate in Theorem \ref{thm:entropy} with the results obtained using other methods.

\subsection{The finite section method}

There is a more classical approach to estimating the entropy of the Bolyai-R\'enyi map directly.
We have seen that the metric  entropy $h(\mu)$ (or equivalently the Lyapunov exponent)
can be described using the maximal eigenvalue and the associated eigenfunction
(and eigenmeasure) of a transfer operator, see \eqref{eq:pressure_der}. We write
\[h(\mu) = \mu(\log |T'|) = \ell^*(\eta \cdot \rho), \]
where $\rho, \ell^*$ are the eigenfunction and eigenfunctional of $\mathcal{L}_0$ corresponding to the leading eigenvalue $1$, normalized so that $\ell^*(f) = 1$,
and $\eta(x) = \log(|T'(x)|)$ for $x\in I=[0,1]$.

Using the Lagrange-Chebyshev interpolation method (Definition \ref{defn:lagcheb}), we approximate
$\mathcal L_0$ by a rank $m$ operator (for some $m \in \mathbb{N}$), represented by an $m\times m$ matrix $M$.
The right and left eigenvectors $v, w \in \mathbb R^m$ corresponding to the leading eigenvalue of $M$ give rise to the corresponding eigenfunction $\rho_m$ and the eigenfunctionals $\ell_m$ of the approximating rank $m$ operator via
$$
\rho_m(x) = \sum_{l=0}^{m-1} v_l e_l(x) \quad
\text{ and } \quad
\ell_m(f) = \sum_{l=0}^{m-1} w_l e_l^*(f),
$$
where $e_m$ and $e^*_m$ are as in Definition~\ref{defn:lagcheb}, up to an (affine) change of coordinates, taking into
account the (non-standard) interval $I=[0,1]$.
Writing $b_l = e_l^*(\eta \cdot \rho_m)$, the approximation to the entropy can now be computed as
$$
h_m = \frac{\ell_m(\eta \cdot \rho_m)}{\ell_m(\rho_m)}
=     \frac{\sum_{l=0}^{m-1} b_l w_l}{\sum_{l=0}^{m-1}v_l w_l}.
$$
The denominator in this expression is needed for normalization.

\begin{example}
If we let $m=100$ and carry out the above calculation we have the following {\it heuristic} estimate for the entropy
\begin{align*}
h_{100} = 1.&0563130740\,7297055209\,9568877064\,0651679335\,4262184005\, \\
            &7092244740\,0283696700\,9505655203\,1501166170\,438688675 \ldots
\end{align*}
\end{example}
\noindent
This agrees with the value in Theorem \ref{thm:entropy}. 
Theorem 3.3 and Corollary 3 of \cite{BS} guarantee exponential convergence (in $m$)
of $h_m$ to $h(\mu)$, where the convergence rate can be bounded using the complex contraction ratios of the inverse branches of $T$ on suitable Bernstein ellipses. However the implied constant depends on the resolvent
of the operator, 
making rigorous error estimates more difficult to obtain in this case.

\subsection{The periodic point method}
The approach in \cite{jp} was based on using the data $(T^n)'(x)$ for fixed points $T^nx=x$ for $T^n\colon[0,1] \to [0,1]$
($n \geq 1$).  More precisely, one considers a  determinant function of two variables formally defined by
$$
d(z,s) = \exp \left(
- \sum_{n=1}^\infty \frac{z^n}{n} \sum_{T^nx=x} \frac{|(T^n)'(x)|^{-s}}{1-1/(T^n)'(x)} \right),
$$
which converges for any $z \in \mathbb C$  and  $\operatorname{Re}(s) $ sufficiently large, and has an analytic extension to $\mathbb C^2$.    The entropy can then be written in the form
$$
h(\mu) =\frac{\partial d(z,s)}{ds}\big/\frac{\partial d(z,s)}{dz}\big\vert_{s=0, z=1}.
$$
In order to convert this into a useful algorithm  we truncate the Taylor series  for $d(z,s)$ in $z$.
 In particular, for any  $\frac{1}{3-\sqrt{3}} < \theta  < 1$ and $M \geq 1$, we have
$$
d(z,s) = 1 +  \sum_{n=1}^M a_n(s) z^n + O(\theta^{M^2}),
$$
where $a_n(s)$ depends only on the weights associated to periodic points of period at most $n$.

\begin{example}
With the choice $M=12$ the entropy was rigorously estimated
to be\footnote{Here the notation $x = A \pm 10^{-\kappa}$ signifies
that $A - 10^{-\kappa} \leq x \leq A +10^{-\kappa} $ holds rigorously.}
$h(\mu) = 1.05631307402 \pm 4.1 \times  10^{-6}$ (see \cite[\S 5]{jp})
and heuristically estimated to $9$ decimal places as $1.056313074$.
This required computing  the $3^M=531441$ periodic points $T^nx=x$
and their weights $(T^n)'(x)$
for $1 \leq n \leq 12$.
\end{example}

Although the error terms can be bounded with comparative ease, an issue for this method is the exponential growth in data that needs to be managed as $M$ increases.

\section{Final remarks}
\label{sec:remarks}

In this last section, we briefly comment on some directions in which our method and results generalize, as
well as on some of their limitations.

\begin{remark}
In the interest of clarity, we have chosen to define the operators
$\mathcal L_t$, $\mathcal M_t$ and $\mathcal N_{j,t}$ separately.
Instead, we could have introduced a single more general class of operators of the form
$$
\widehat {\mathcal L}_t f(x) =  \sum_{i=0}^2|T_i'(x)| e^{g(T_ix)} f(T_ix),
$$
with different choices of $g \in C^1([0, 1])$ giving rise to the three operators of interest.
\end{remark}

\begin{remark}
Our method readily applies to generalizations of the Bolyai-R\'enyi transformation \cite{schweiger}.
As an example, for $m \in \mathbb{N}\setminus\{1\}$ we consider $\tilde T_m \colon [0, 1) \to [0, 1)$
given by
\[
 \tilde T_m(x) = (x+1)^m - 1,
\]
noting that for $m = 2$ this reduces to the classical Bolyai-R\'enyi map $T = \tilde T_2$.
For each $m$, the map $\tilde T_m$ has exactly $2^m-1$ contractive inverse branches given by
$x \mapsto (x+i)^{1/m} - 1$, $i = 1, 2, \ldots, 2^m-1$. Completely analogously to Theorem \ref{thm:entropy},
we can rigorously estimate the entropy $h(\tilde T_m, \mu_m)$ (where $\mu_m$ denotes the respective $\tilde T_m$-invariant
probability measure absolutely continuous to Lebesgue measure).
For $m = 3, 4, \ldots, 10$ we obtain the following values,
each accurate to the number of decimal places presented:
\begin{table}[ht]
\centering
\begin{tabular}{c|c}
m & $h(\tilde T_m, \mu_m)$ \\
\hline
3  & 1.83495\,44938\,47482\ldots \\
4  & 2.50156\,90070\,03226\ldots \\
5  & 3.10685\,89449\,66953\ldots \\
6  & 3.67309\,55489\,06997\ldots \\
7  & 4.21213\,20147\,18818\ldots \\
8  & 4.73108\,63064\,39220\ldots \\
9  & 5.23459\,49760\,98698\ldots \\
10 & 5.72585\,67503\,35337\ldots
\end{tabular}
\end{table}
\end{remark}

\begin{remark}
We have presented our results in the specific setting of expansions by
iterated radicals. This is in part to illustrate the method, and as a paradigm of a more
general method, which originated in \cite{PV1}, while also
giving better results on the map $T$, interesting in its own right.
The important properties that we need (and which are easy to check  for $T$) are the following:
\begin{enumerate}
\item $T$ is conformal (which is automatic since it is one dimensional);
\item $T$ is $C^1$; 
\item $T$ has a finite number of inverse branches;
\item $T$ is expanding, i.e., there exists $\kappa > 1$ such that  $\inf_x |T'(x)| \geq \kappa$;
\item $T$ is Markov (even Bernoulli, in this case).
\end{enumerate}
This fits naturally in the setting of $f$-expansions \cite{renyi}.{}
These properties allow us to make of use Theorem~\ref{thm:bounds_h},
Corollaries~\ref{cor:r_bounds} and \ref{cor:dimh}. In order to bound the relevant
quantities rigorously to a very high precision using the algorithm presented, we crucially used the fact
that $T$ is analytic, ensuring exponential convergence of the Lagrange-Chebyshev interpolation scheme \cite{BS}.
\end{remark}

\begin{remark}
While as above our method generalizes beyond the exact setting of this paper,
we should also acknowledge its limitations. In particular, the application of
our method presented here is specifically tailored
for estimating quantities relating to the top eigenvalue of a transfer operator.
It is not directly applicable to estimating quantities such as the rate of mixing of a transformation,
which is determined by the transfer operator's spectral gap
and has been previously estimated with the periodic point method \cite{jp}.
\end{remark}

\begin{remark}
A real number is called \emph{computable} if there is an algorithm (corresponding to a Turing machine)
computing a value abitrarily close to that number.
The set of computable numbers is countable.
It is a consequence of the algorithm presented that the entropy $h(\mu)$ is a computable number.
\end{remark}

\begin{remark}
Our algorithm can be used to obtain any specified accuracy on the estimated quantities
(given sufficient computational resources).
It is natural to ask how its time complexity grows with an increase in desired accuracy.
This boils down to the time complexity of \textsc{compute\_bounds} (Algorithm \ref{alg:compute_bounds}),
which is driven by computing the top eigenvalue and corresponding eigenvector of an $m \times m$ matrix,
and evaluating the approximation $P_n \psi$ on $k$ intervals.
The first of these has a complexity of $O(p \cdot m^2)$,
with $p$ the number of steps of the power method (both $p$ and $m$ can be expected to be $O(\log(1/\delta))$ for a desired
eigenvector accuracy of $\delta$), the second has a complexity of $O(k \cdot n)$.
\end{remark}

\Addresses


\begin{thebibliography}{1}

\bibitem{ramanujan}
B.C. Berndt, {\it Ramanujan's Notebooks}, Part II, Springer, Berlin, 1989.

\bibitem{BS}
O.F. Bandtlow and J. Slipantschuk,
Lagrange approximation of transfer operators associated with holomorphic data,
(2020) arXiv:2004.03534.

\bibitem{bolya}
W. Bolyai, Tentamen iuventutem st\'udi\'osam in elementa matheseos purae element\'aris
ac sublimioris methodo intuitiva evidentiaque huic propria introducendi, ed. sec. (Budapest, 1897), Vol. 1.

\bibitem{bowen}
R. Bowen, Hausdorff dimension of quasi-circles,
Publ. Math. I.H.E.S. 50 (1979) 11--25.

\bibitem{bdk}
W. Bosma, K. Dajani and C. Kraaikamp,
Entropy quotients and correct digits in number-theoretic expansions,
in {\it  Dynamics \& stochastics}, 176--188,
IMS Lecture Notes Monogr. Ser., 48, Inst. Math. Statist., Beachwood, OH, 2006.

\bibitem{good}
I. J. Good, The fractional dimension theory of continued fractions Proc. Camb.
Phil. Soc. 37 (1941) 199--228.

\bibitem{jp} 
O. Jenkinson and M. Pollicott, 
Ergodic properties of the Bolyai-R\'enyi expansion,
Indag. Math. 11 (2000) 399--418.

\bibitem{arb}
F. Johansson,
Arb: Efficient arbitrary-precision midpoint-radius interval arithmetic,
IEEE Trans. Comput. 66 (2017) 1281--1292.

\bibitem{kato}
T. Kato, {\it Perturbation Theory for Linear Operators}, Springer, Berlin, 2013.

\bibitem{ly}
A. Lasota and J. Yorke,
On the existence of invariant measures for piecewise monotonic transformations,
Trans. Amer. Math. Soc. 186 (1973) 481--488.

\bibitem{lochs}
G. Lochs,
Vergleich der Genauigkeit von Dezimalbruch und Kettenbruch,
Abh. Math. Semin. Univ. Hambg. 27 (1964), 142--144.

\bibitem{mayer}
D. H. Mayer,
Continued fractions and related transformations,
Lectures at the Topical Meeting on Hyperbolic Geometry and Ergodic Theory, ICTP-Trieste, Italy (1989).


\bibitem{pp}
W. Parry and M. Pollicott,
{\it Zeta functions and the periodic orbit structure of hyperbolic dynamics},
Ast\'erisque, 187-188 (1990) 1--268.


\bibitem{PS}
M. Pollicott and J. Slipantschuk,
Sierpi\'nski  fractals and the  dimension of their Laplacian spectrum, Math. Comput. Appl. 28(3) (2023) 70.

\bibitem{PV1}
 M. Pollicott and P. Vytnova,
 Hausdorff dimension estimates applied to Lagrange and Markov spectra, Zaremba theory, and limit sets of Fuchsian groups,
 Trans. Amer. Math. Soc. Ser. B 9 (2022).

\bibitem{renyi}
A. R\'enyi,
Representations for real numbers and their ergodic properties.
Acta Mathematica Academiae Scientiarum Hungaricae 8 (1957) 477--493.

\bibitem{ruelle}
D. Ruelle, {\it Thermodynamic Formalism}, Addison-Wesley, New York, 1978.

\bibitem{ruelle1}
D. Ruelle,
Repellers for real analytic maps,
Ergodic Theory Dynam. Systems, 2 (1982) 99--107.

\bibitem{schweiger}
F. Schweiger,
Invariant measures of piecewise fractional linear maps and piecewise quadratic maps,
Int. J. Number Theory 14 (2018) 1559--1572.

\bibitem{walters}
P. Walters, {\it Ergodic Theory}, Springer, Berlin, 1982.


\end{thebibliography}
\end{document}